\newtheorem{theorem}{Theorem}
\newtheorem{lemma}[theorem]{Lemma}
\begin{document}

\title{A divide-and-conquer algorithm for binary matrix completion}
	
\author{Melanie Beckerleg\fnref{fn1}}
\ead{beckerleg@maths.ox.ac.uk}
\address{Mathematical Institute, University of Oxford, Andrew Wiles Building, Woodstock Road, Oxford, OX2 6GG, UK.}

\author{Andrew Thompson\fnref{fn1}\corref{cor1}}
\ead{thompson@maths.ox.ac.uk}
\address{Mathematical Institute, University of Oxford, Andrew Wiles Building, Woodstock Road, Oxford, OX2 6GG, UK.}

\cortext[cor1]{Corresponding author}
\fntext[fn1]{This publication is based on work partially supported by the EPSRC Centre For Doctoral Training in Industrially Focused Mathematical Modelling (EP/L015803/1) in collaboration with e-Therapeutics plc.}
\fntext[fn2]{In compliance with EPSRC's open access inititive, the data in this paper is available from https://doi.org/10.5061/dryad.51c59zw5r}	
	
\begin{abstract}
We propose a practical algorithm for low rank matrix completion for matrices with binary entries which obtains explicit binary factors and show it performs well at the recommender task on real world datasets. The algorithm, which we call TBMC (\emph{Tiling for Binary Matrix Completion}), gives interpretable output in the form of binary factors which represent a decomposition of the matrix into tiles. Our approach extends a popular algorithm from the data mining community, PROXIMUS, to missing data, applying the same recursive partitioning approach. The algorithm relies upon rank-one approximations of incomplete binary matrices, and we propose a linear programming (LP) approach for solving this subproblem. We also prove a $2$-approximation result for the LP approach which holds for any level of subsampling and for any subsampling pattern, and show that TBMC exactly solves the rank-\(k\) prediction task for a underlying block-diagonal tiling structure with geometrically decreasing tile sizes, providing the ratio between successive tiles is less than \(1/\sqrt{2}\). Our numerical experiments show that TBMC outperforms existing methods on recommender systems arising in the context of real datasets.
\end{abstract}	

\begin{keyword}
    Binary matrix completion, linear programming, recommender systems. \MSC[2010] 65K99.
\end{keyword}
	
\maketitle 


\section{Introduction}

\subsection{Matrix completion}

Matrix completion is an area of great mathematical interest and has numerous applications including recommender systems for e-commerce, bioactivity prediction and models of online content, such as the famous Netflix problem. 

The recommender problem in the e-commerce setting is the following: given a database where rows are users and column entries indicate user preferences for certain products, fill in the entries of the database so as to be able to recommend new products based on the preferences of other users. Typically these matrices are highly incomplete, since most users will only have experienced a small fraction of all available products~\cite{aggarwal2016recommender}. Similarly, in bioactivity prediction, compound-protein interaction (CPI) databases record bioactivity between potential drug compounds (rows, or users) and target proteins (columns, or products). Obtaining experimental evidence of all possible interactions is prohibitively expensive however, and therefore there are few known entries relative to the overall size of the database, see for example~\cite{liu2015improving}. 

Low rank approaches to matrix completion have been the focus of a great deal of theoretical and algorithmic exploration ever since the seminal work in~\cite{fazel2002matrix,candes2009exact}. In many cases, the problem is formulated as follows. Given a database \(\mathbf{A} \in \mathbb{R}^{m \times n}\), with observed entries in \(\Omega\), find a matrix \(\mathbf{X} \in \mathbb{R}^{m \times n}\) with minimal rank that matches the observed entries up to a given tolerance, i.e. which solves
\begin{equation}\label{eqn:matcompreal}
\min|| \mathcal{P}_{\Omega}(\mathbf{A} - \mathbf{X})||_2^2 \text{ s.t. }  \text{rank}(\mathbf{X} ) \le r
\end{equation}

where \(r\) is some small integer and \(\mathcal{P}_{\Omega}\) is the projection to the space of known entries, such that the error is evaluated only for the \((i,j) \in \Omega\). 

A variety of algorithms have been proposed to solve this non-convex problem, see~\cite{tanner2016low} and references therein. Another popular approach is to solve a convex relaxation involving the nuclear norm~\cite{candes2009exact}. Such algorithms have been applied successfully in a wide range of applications, including recommender systems~\cite{koren2009matrix}. 

\subsection{Binary matrix completion}

The recommender problem can be seen as a binary decision problem. When applied to a binary matrix, \(\mathbf{A} \in \mathbb{B}^{m \times n}\), the output of the matrix completion algorithms described above cannot be guaranteed to be binary, and a typical approach for matrix completion with binary data is to threshold the output from an algorithm for completion with real data~\cite{vinayak2014graph, ames2011nuclear}. As highlighted in \cite{xu2014jointly}, where databases follow model assumptions and recovery from solving is exact, factorisation reduces to assigning identical rows to the same cluster. However, this approach is not robust to the violation of assumptions and breaks down when recovery is not exact. This motivates the search for algorithms which explicitly seek binary solutions. 

The problem \eqref{eqn:matcompreal} can also be formulated as one of approximating \(\mathbf{X}\) as the product of factors, namely
\begin{equation}\label{eqn:matcompfactreal}
\min|| \mathcal{P}_{\Omega}(\mathbf{A} - \mathbf{UV}^T)||_2^2 \text{ s.t. }  \mathbf{U} \in \mathbb{R}^{m \times k}, \mathbf{V} \in \mathbb{R}^{n \times k}~.
\end{equation}
We are interested in this paper in approximating our database with binary factors, due to the greater interpretability of the output. To appreciate this, note that a binary factorisation 
$$\mathbf{UV^T}=\sum_{i=1}^k \mathbf{U}_{:,i} \mathbf{V}_{:,i}^T$$ 
decomposes a binary matrix into biclusters of rows and columns, often referred to in the itemset mining community as \emph{tiles}, see for example~\cite{geerts2004tiling}. This decomposition provides an explicit characterization of the row/column clusters that best explain the database. Low rank decompositions designed for real matrices, on the other hand, tend to be SVD-like and orthogonal, with negative entries, and it is less clear how to interpret these. Low rank matrix completion with nonnegativity constraints, for example in~\cite{xu2012alternating}, enforces non-negativity of factors, but does not address the issue of rounding errors induced by rounding non-integer values. Matrix completion algorithms were proposed in~\cite{xu2015cur,wang2017provably} which obtain decompositions in which one factor is composed of rows of the matrix, and is by consequence binary, although the other factor is generally non-integer. 

However, in the case where both factors are required to binary, research to date has largely focused on the case of \emph{binary matrix factorisation} (BMF) in which the matrix is fully observed. BMF has found applications in itemset mining of transactional and textual data~\cite{geerts2004tiling}, and also in the analysis of gene expression data~\cite{zhang2010binary}. In these applications, entries are in general fully observed, though possibly with noise. 

Various algorithms for BMF have been proposed. The problem can be formulated as an integer program, but the use of an integer programming solver is only tractable for very small problem sizes. An approach combining convex quadratic programming relaxations and binary thresholding was proposed in~\cite{zhang2010binary}. For the special case of rank-one approximation, linear programming relaxations were proposed in~\cite{shen2009mining,lu2011weighted, zhang2010binary}, the first two of which also both proved that the linear programming solution yields a $2$-approximation to the optimal objective. An approach to binary factorisation based on $k$-means clustering was considered in \cite{jiang2014clustering}. A local search heuristic capable of improving solutions obtained by other methods was proposed in~\cite{mirisaee2015improved}. Of particular relevance to this paper is the PROXIMUS algorithm, proposed in \cite{koyuturk2006nonorthogonal}, which identifies patterns within the data using recursive partitioning based on rank-one approximations. 

Closely related to BMF is the \emph{Boolean} matrix factorisation problem, in which \(\mathbf{UV}^T\) is replaced by the OR operator, \(\mathbf{U} \wedge  \mathbf{V}\), which allows the tiles to overlap. A number of algorithms also exist for Boolean factorisation, including the iterative ASSO algorithm~\cite{miettinen2008discrete} and integer programming~\cite{kovacs2018lowrank}.

\subsection{Our contribution and related work}

The novelty of this paper is the use of a partitioning approach to solve the problem of BMF with missing data, which can be formulated as follows. For \(\mathbf{A} \in \mathbb{B}^{m \times n}\), solve

\begin{equation}\label{eqn:rankktiling}
\min_{\begin{array}{c}\mathbf{U} \in \mathbb{B}^{m \times r} \\ \mathbf{V} \in \mathbb{B}^{n \times r} \end{array} }||\mathcal{P}_{\Omega}(\mathbf{A} - \mathbf{U}\mathbf{V}^T)||_2^2  ~.
\end{equation}

The contributions of this paper are as follows.\\

\begin{itemize}
	\item We propose TBMC (Tiling for Binary Matrix Completion), a low rank binary matrix completion algorithm (\Cref{sec:TBMC}). The algorithm extends the recursive partitioning approach of  \cite{koyuturk2006nonorthogonal} for BMF  by means of rank-one approximations.\\ 
	\item In particular, we propose using an LP rank-one approximation for missing data. We support this choice with a guarantee that it provides a $2$-approximation to the optimal objective value, showing that the reasoning of \cite{shen2009mining} holds in the missing data case (\Cref{sec:approximation}).\\
	
	\item We show that, under the assumption of a block diagonal model for our data, this algorithm correctly identifies tiles for a rank-\(k\) database with geometrically decreasing tiles, where the ratio between successive tiles is less than \(1/\sqrt{2}\).
	
	\item We show that our algorithm outperforms alternatives based on related heuristics and techniques for non-negative matrix completion and binary matrix completion, when tested on synthetic and real life data (\Cref{sec:numerical}). 
\end{itemize}
$ $\\
\indent The most closely related work we are aware of is the \emph{Spectral method} proposed in \cite{xu2014jointly} for bi-clustering databases with missing data. The authors use low rank completion to cluster neighbour rows and then redefine the column clusters based on cluster membership, in a similar fashion to $k$-means. We show that our algorithm outperforms the Spectral method when solving Problem~\eqref{eqn:rankktiling} for real world datasets.

The authors of ~\cite{yadava2012boolean} extend the ASSO algorithm for Boolean matrix factorisation to deal with missing data. However, it is worth pointing out that their setup, as well as solving a different problem, is primarily intended for only small amounts of missing data. Methods that involve linear programs can be straightforwardly extended to the missing data case, by evaluating the objective and enforcing constraints only for \((i,j) \in \Omega\), however we are not aware of any attempts to analyse their predictive power for the recommender problem. 

\subsection{Relation to graph-theoretic problems}

Viewing the database as the adjacency matrix of a graph, we can view~\eqref{eqn:rankktiling} as a clustering problem; in particular the problem is that of approximating the edge set of a partially observed graph as a union of bicliques. The factors \(\mathbf{U}\) and \(\mathbf{V}\) can be interpreted as indicating the rows and columns of these bicliques. {color{red}The authors of  \cite{vinayak2014graph} solve the related problem for the diagonal block model, expressing the database as a sum of a low rank matrix plus a sparse component to account for noise.  Our approach differs as we allow column overlap of clusters.} Bi-clustering approaches, in particular for clustering gene expression data, have been used to solve formulations similar to the BMF problem, with different assumptions about the underlying data, equivalent to considering different constraints on \(\mathbf{U}\) and \(\mathbf{V}\). However these have focused primarily on cluster recovery; our focus is on exploring the predictive power of different algorithms for solving \Cref{eqn:rankktiling}.

%
%
%
	\section{The TBMC algorithm} \label{sec:TBMC}

We present an algorithm for low rank binary matrix completion inspired by the partitioning approach of \cite{koyuturk2006nonorthogonal} that generates a binary factorisation based on recursive rank-one partitions.

\subsection{Partitioning} \label{sec:partition}

Our algorithm is inspired by the recursive partitioning approach of the PROXIMUS algorithm for binary matrix factorisation~\cite{koyuturk2006nonorthogonal}. For a given submatrix, \(\mathbf{B}\), the algorithm first calculates a binary rank-one approximation \(\{\mathbf{u},\mathbf{v}\}\). The matrix \(\mathbf{B}\) is then partitioned based upon the rows included in the tile \(\{\mathbf{u},\mathbf{v}\}\): if the \(i^{th}\) entry of \(\mathbf{u}\) is positive, then the \(i^{th}\) row  is included in \(\mathbf{B}_1\), else it is included in \(\mathbf{B}_0\). Both submatrices are added to the set of submatrices to be partitioned. 

The vector \(\mathbf{v}\) is used as the pattern vector for \(\mathbf{B}_1\). If the scaled Hamming distance, defined as \(H(\mathbf{v},A_{i,:})=\sum_{j:(i,j)\in \Omega} (v_{ij}\neq A_{ij})/\sum_{j:(i,j)\in \Omega} 1\), from \(\mathbf{v}\) to any of the rows in \(\mathbf{B}_1\) is less than some tolerance \(t\), for \(0<t<1\), then the tile \(\mathbf{u}\mathbf{v}^T\) is included in the decomposition and \(\mathbf{B}_1\) is not partitioned any further. In addition, if the rank-one approximation returns a row vector of \(1\)s, the tile is added to the decomposition. This process is repeated on successive submatrices until convergence. The algorithm terminates when no partitions remain.  An illustration of the splitting process can be seen in \Cref{fig:splitting}.

\begin{figure}
	\centering	
	\includegraphics[width=\linewidth]{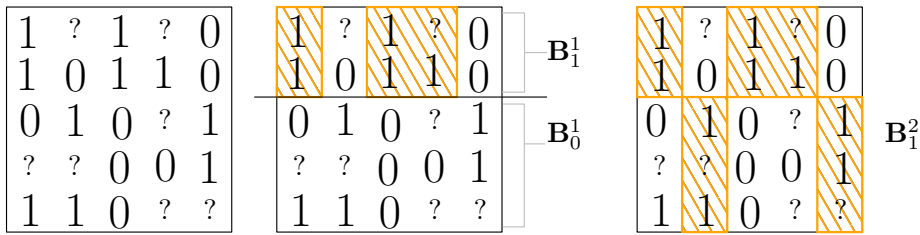}
	\caption{A row-wise partition is formed from a rank-one approximation (a tile) by splitting according to which rows are included in the tile.}
	\label{fig:splitting}
\end{figure}

\subsection{Rank-one approximation with missing data} \label{sec:rankone}

The partitioning method outlined above relies on rank-one approximations of the database. We outline two methods for this sub-problem.

\subsubsection{Approximate rank-one using a linear program}
Inspired by the work of \cite{shen2009mining}, we propose a linear programming relaxation of the rank-one approximation problem with missing data.

The rank-one problem can be formulated as an integer linear program for the case where no data is missing~\cite{mirisaee2015improved,kovacs2018lowrank}. The authors of \cite{shen2009mining} show that a linear program relaxation of a related problem has no more than twice the error of the integer solution. In \Cref{sec:approximation} we show that a similar result holds for the missing data case. 

We can formulate the best binary rank-one approximation problem as a linear program as follows:
\begin{equation}\label{bestlp1}
\begin{array}{ll}\renewcommand{\arraystretch}{1.2}
\min\limits_{ \tiny
	\begin{array}{l} \mathbf{u} \in \{0,1\}^{m} \\ \mathbf{v} \in \{0,1\}^{n}  \end{array}}  
|| \mathcal{P}_{\Omega}(\mathbf{A} - \mathbf{uv}^T)||_2^2 &=

\min\limits_{\tiny
	\begin{array}{l}
	\mathbf{u} \in \{0,1\}^m \\ 
	\mathbf{v} \in \{0,1\}^n
	\end{array}} 
\sum\limits_{(i,j) \in \Omega} (A_{ij}-u_i v_j)^2 \\

&= \min\limits_{\tiny
	\begin{array}{l}
	\mathbf{u} \in \{0,1\}^m \\ 
	\mathbf{v} \in \{0,1\}^n
	\end{array}} \sum\limits_{(i,j) \in \Omega} A_{ij}^2  - 2(A_{ij}-1)u_iv_j ~.

\end{array}
\end{equation}

Since \(A\) is fixed, (\ref{bestlp1}) is equivalent to solving  \begin{equation}\label{eqn:besttile}\max_{\tiny \begin{array}{l}
	\mathbf{u} \in \{0,1\}^m \\ 
	\mathbf{v} \in \{0,1\}^n 
	\end{array}} \sum\limits_{(i,j) : A_{ij}=1} u_iv_j - \sum\limits_{(i,j) : A_{ij}=0} u_iv_j.\end{equation}


We translate Problem (\ref{eqn:besttile}) to a linear program as follows: introducing dummy variables \(z_{ij}\) which serve as indicator variables for \(u_iv_j\) we can rewrite (\ref{eqn:besttile}) as

\begin{equation}\renewcommand{\arraystretch}{1.2}\label{eqn:IPbest}
\begin{array}{rl}
\max\limits_{\tiny
	\begin{array}{cl}
	\mathbf{u} \in \{0,1\}^m \\ 
	\mathbf{v} \in \{0,1\}^n \\
	\mathbf{z} \in \{0,1\}^{n \times m} \\
	\end{array}} &
\sum\limits_{(i,j) \in \Omega : A_{ij}=1} z_{ij}  -
\sum\limits_{(i,j) \in \Omega : A_{ij}=0} z_{ij} \\

\text{ s.t. }&
\left.
\begin{array}{ll}
 u_i+v_j -z_{ij} \leq 1  \\
2z_{ij} \leq u_i+v_j .
\end{array} \right\} (i,j) \in \Omega

\end{array}
\end{equation}

If we relax the integer constraints, then for optimal \(z_{ij}\), the second constraint will be satisfied as an equality for \(A_{ij}=1\), so we can drop the corresponding dummy variables and replace their value in the objective with \(\dfrac{1}{2}(u_i+v_j)\). Thus we consider the following formulation for approximating the solution to \Cref{eqn:IPbest}

\begin{equation}\renewcommand{\arraystretch}{1.2}\label{eqn:LPbest}
\begin{array}{rl}

\max\limits_{\tiny
	\begin{array}{cl}
	\mathbf{u} \in \mathbb{R}^m \\ 
	\mathbf{v} \in \mathbb{R}^n 
	\end{array}}
& \sum\limits_{(i,j) \in \Omega : A_{ij}=1} \dfrac{1}{2}(u_i+v_j)  -
\sum\limits_{(i,j) \in \Omega : A_{ij}=0} z_{ij} \\

\text{s.t.} 
& \left.
\begin{array}{l}
u_i+v_j -z_{ij} \leq 1  \\
0<z_{ij}<1
\end{array} \right\}(i,j) \in \Omega : A_{ij}=0

\\ &
\begin{array}{ll}
0<u_i,v_j<1&(i,j) \in \Omega\\
\end{array}

\end{array}
\end{equation}

The corresponding constraint matrix is totally unimodular (as it is a submatrix of the constraint matrix in \cite{shen2009mining}), and so solving will give integral values for \(\{u,v,z\}\). We can obtain the corresponding rank-one approximation as \(\mathbf{u}\mathbf{v}^T\). Note that \(z_{ij}\) is only defined for \((i,j):A_{ij}=0\). The remaining \(z\) values can be calculated as \(1/2(u_i+v_j)\) but are no longer guaranteed to be integers. \\

\subsubsection{Alternating minimisation}\label{subsec:altmin}

Alternating minimisation has been widely used for matrix completion, see for example \cite{jain2013low}. Given a fixed \(\mathbf{v}\) we want to find \(\mathbf{u}\) to minimise the approximation error
\begin{equation}\label{eqn:errorapprox}
    ||\mathcal{P}_{\Omega}(\mathbf{A}-\mathbf{u}\mathbf{v}^T)||_2^2= \sum_{i,j \in \Omega} A_{ij}^2- u_i(2A_{ij}v_j - v_j^2)
\end{equation}

where we have used the fact that \(\mathbf{u}\) and \(\mathbf{v}\) are binary. Writing
\begin{equation}\label{eqn:W_def}W_{ij}=\begin{cases}2 A_{ij}-1 & \text{ if } i,j \in \Omega \\
					0 & \text{ otherwise}, \end{cases}\end{equation}
we can write the right-hand side of (\ref{eqn:errorapprox}) as

\begin{equation}\label{eqn:errorapprox_missing}
\sum_i \sum_{j:(i,j) \in \Omega} A_{ij}^2-u_i W_{ij}v_j.
\end{equation}
Suppose we are at iteration $k$ and we are fixing $\mathbf{v}^k$. Then it follows from (\ref{eqn:errorapprox_missing}) that the updated binary \(\mathbf{u}^{k+1}\) is given by 

\begin{equation}\label{eqn:iter_threshold}u_i^{k+1} = \begin{cases} 1 & \text{ if } (\mathbf{W}\mathbf{v}^k)_i>0 \\	
							0& \text{ else.} \end{cases}\end{equation}

Then \(\mathbf{v}^{k+1}\) can be calculated in a similar way. Typically the process converges in only a few iterations.

The authors of \cite{koyuturk2006nonorthogonal} outline a number of heuristics for initialising the iterative scheme. In particular, if not initialised correctly, this method can lead to the empty tile, which is always sub-optimal. We propose using alternating minimisation as an optional post-processing step for the TBMC algorithm stated in Section \ref{sec:lpbest}.

\subsection{Statement of algorithm}\label{sec:lpbest}

Using the ideas of \Cref{sec:partition} and \Cref{sec:rankone} we now propose an algorithm for binary matrix completion for recommender systems. 

Starting with the full database, we calculate \(\{\mathbf{u^*}, \mathbf{v^*} \}\) the rank-one approximation obtained by solving Problem (\ref{eqn:LPbest}) for the current partition, then partition into the rows that are included in the tile and those that are not, and repeat for both sides of the partition. As in \cite{koyuturk2006nonorthogonal}, when the partitioning process generates rows that are all within a given radius of \(\mathbf{v}^*\) or when \(\mathbf{u}^*\) is the vector of all \(1\)s or all \(0\)s, then \(\{\mathbf{u^*}, \mathbf{v^*} \}\) is added to the factorisation. The algorithm is stated in detail in \Cref{alg:ours}. We could also consider using the alternating minimisation outlined in Section~\ref{subsec:altmin} to improve the approximation found by solving \eqref{eqn:LPbest}. We refer to this approach as TBMC-AM. In its current form, the algorithm only partitions by rows. We also considered a variant by which each tile identified by the algorithm was then partitioned in the same manner column-wise. It is possible to construct examples for which this would improve the accuracy but as we found no change in the output for any of our experiments, we speculate that these examples are rarely found in practise. 

\begin{minipage}[b]{0.8\linewidth}
	\begin{algorithm}[H] 
		\caption{Tiling for Binary Matrix Completion (TBMC)}\label{alg:ours}
		\begin{algorithmic}[1]	
			\State {Initialise} \(S=\left\{\mathbf{A}\right\}\), \(\mathbf{U}=\emptyset,\mathbf{V}=\emptyset\), \(i=1\)
			\While{$S\not=\emptyset$, \(k<k_{max}\)}
			\State select \(\mathbf{B} \in S\), set \(S \gets S \backslash \mathbf{B}\)

			\Procedure {find rank-one approximation for \(\mathbf{B}\):}{}
			\State Find \(\{\mathbf{u}, \mathbf{v}\}\) as the solution to \ref{eqn:LPbest}		
			\EndProcedure
			
			\Procedure {Partition \(\mathbf{B}\)}{}
			\State Define \(J= \left\{j : \mathbf{u}^k(j)=1\right\}\)
			\State Set \(\mathbf{\mathbf{B}}_1=\mathbf{B}(J,:)\), \(\mathbf{B}_0=\mathbf{B}(\overline{J},:)\)
			\EndProcedure
			\Procedure{Update}{}
			\If {\(\mathbf{B}_0  \neq \mathbf{0} \text{ and }  \mathbf{u}\neq \mathbf{0} \)} 
			\(S \gets B_0\)
			
			\EndIf
			
			\If{ \(\max_j {||\mathbf{B}_1(j,:)-v^k||< t} , \text{ or } \mathbf{u}  = \mathbf{1} \)
			
			}{ 
			
			\(~~~~~~~~U \gets \mathbf{u}^k\),\(V \gets \mathbf{v}^k\), \(k \rightarrow k+1\)}
			\Else
			\(~S \gets S \cup \mathbf{B}_1 \)

			\EndIf
			\EndProcedure
			\EndWhile
		\end{algorithmic}
	\end{algorithm}
\end{minipage}

\subsection{Algorithm complexity}

The most computationally demanding step in \(\Cref{alg:ours}\) is the solution of (\ref{eqn:LPbest}). To solve a non-degenerate LP with a simplex method, the worst case complexity is exponential in the number of variables (equal to \(N=n+m+|\{(i,j) : (i,j) \in \Omega, A_{ij}=0\}|\)) and the number of constraints (equal to \(M=2N+|\{(i,j) : (i,j) \in \Omega, A_{ij}=0\}|\)), since we have one constraint for each \(z_{ij}\), plus constraints that each variable be between \(0\) and \(1\). This upper bound arises because the method will terminate when it has checked every vertex of the feasible region defined by the problem. There are \(M+N\) possible vertices, so there are \(2^{M+N}\) possible intersections. However, since the constraint matrix of (\ref{eqn:LPbest}) is totally unimodular 
(see Section~\ref{sec:rankone}), a polynomial bound in $M$ and $N$ can be obtained. By Corollary 4.1 of \cite{kitahara2013bound}, solving (\ref{eqn:LPbest}) using Bland's rule~\cite{bland1977new} (or another method for avoiding cycles) will terminate in at most \(2N(M \log N+1)\) operations. 

The other steps in the algorithm have lower complexity. The alternating minimisation scheme requires one left and one right matrix vector multiplication per iteration, hence, since we impose a limit on the number of iterations, it takes \(\mathcal{O}(n+m)\) steps. Calculation of the error (step 11) requires at most \(n m\) operations.

In order to determine the complexity of the algorithm as a whole, we multiply the complexity of the rank-one case by \(m\), the worst case for the number of partitions that need to be checked.  So \(\Cref{alg:ours}\) requires at most \(\mathcal{O}(m N M log N)\) operations, i.e. polynomial in $m$ and $n$. 

We note that solving the problem directly involves checking \(k\) options for the membership of each row and column, i.e. \(k^{m+n}\) possibilities. Hence \(\Cref{alg:ours}\) offers a significant improvement over solving the problem directly.

	\section{Approximation bounds for the rank-1 subproblem}\label{sec:approximation}


In \Cref{sec:rankone}, we proposed a linear program method for solving the rank-one step of \eqref{alg:ours}. By closely following the approach in \cite{shen2009mining} for the case in which all the entries in \(\mathbf{A}\) are known, we show that the approximation error of the tile found by solving Problem \eqref{eqn:LPbest} is no more than twice the optimal. 

\begin{theorem}\label{thm:2approx}
	Denoting by \((\mathbf{u}^*,\mathbf{v}^*,\mathbf{z}^*)\) the optimal solution for Problem \eqref{eqn:LPbest} obtained using a simplex method, then the approximation error \(E=||\mathcal{P}_{\Omega}(\mathbf{A} - \mathbf{u}^*{\mathbf{v}^*}^T)||_2^2\) satisfies
	
	\begin{equation} \label{eqn:erroboundlpbest}
	E
	\leq 2 \min_{\tiny
		\begin{array}{l}
		\mathbf{u} \in \{0,1\}^m \\ 
		\mathbf{v} \in \{0,1\}^n 
		\end{array}}   
	||\mathcal{P}_{\Omega}(\mathbf{A}-\mathbf{u}\mathbf{v}^T)||_2^2
	\end{equation}
\end{theorem}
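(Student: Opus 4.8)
The plan is to follow the $2$-approximation argument of \cite{shen2009mining}, verifying that each step survives the restriction of all sums to the observed index set $\Omega$. First I would set up notation: let $(\mathbf{u}^\star,\mathbf{v}^\star,\mathbf{z}^\star)$ be the integral optimum of the relaxed LP \eqref{eqn:LPbest} (integral because the constraint matrix is totally unimodular, as noted in \Cref{sec:rankone}), and let $(\hat{\mathbf{u}},\hat{\mathbf{v}})$ be the true binary optimum of the rank-one problem \eqref{eqn:besttile}. Recall from \eqref{bestlp1} that minimising $\|\mathcal{P}_\Omega(\mathbf{A}-\mathbf{u}\mathbf{v}^T)\|_2^2$ is equivalent to maximising the objective $G(\mathbf{u},\mathbf{v}):=\sum_{(i,j)\in\Omega:A_{ij}=1}u_iv_j-\sum_{(i,j)\in\Omega:A_{ij}=0}u_iv_j$, since the approximation error can be written as $E = \|\mathcal{P}_\Omega(\mathbf{A})\|_2^2 - G(\mathbf{u},\mathbf{v})$ where the first term is a constant (the number of observed ones). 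So the target inequality $E\le 2E_{\mathrm{opt}}$ is equivalent to a lower bound on $G(\mathbf{u}^\star,\mathbf{v}^\star)$ in terms of $G(\hat{\mathbf{u}},\hat{\mathbf{v}})$ and the constant $\|\mathcal{P}_\Omega(\mathbf{A})\|_2^2$; I would do the algebra to state precisely which lower bound on the LP objective is needed.

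Next I would exploit the relationship between the LP \eqref{eqn:LPbest} and the integer program \eqref{eqn:IPbest}. The key observation, inherited from \cite{shen2009mining}, is that the LP objective value $L^\star := \sum_{(i,j)\in\Omega:A_{ij}=1}\tfrac12(u^\star_i+v^\star_j)-\sum_{(i,j)\in\Omega:A_{ij}=0}z^\star_{ij}$ is at least the optimal integer objective, because the binary optimum $(\hat{\mathbf{u}},\hat{\mathbf{v}})$ together with its induced $\hat z_{ij}=\hat u_i\hat v_j$ is feasible for the relaxed LP, so $L^\star\ge L(\hat{\mathbf{u}},\hat{\mathbf{v}},\hat{\mathbf{z}})=G(\hat{\mathbf{u}},\hat{\mathbf{v}})$. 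Then I would relate the LP objective $L^\star$ evaluated at the integral optimum back to the true tile objective $G(\mathbf{u}^\star,\mathbf{v}^\star)$. Because $(\mathbf{u}^\star,\mathbf{v}^\star,\mathbf{z}^\star)$ is integral, on the positive-entry terms $\tfrac12(u^\star_i+v^\star_j)$ overcounts $u^\star_iv^\star_j$ whenever exactly one of $u^\star_i,v^\star_j$ equals $1$; on the zero-entry terms the constraints $u^\star_i+v^\star_j-z^\star_{ij}\le1$ with $z^\star_{ij}\in\{0,1\}$ force $z^\star_{ij}=u^\star_iv^\star_j$ at integrality, so those terms match exactly. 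This is the mechanism that converts the factor-$1/2$ rounding slack into the factor $2$ in the error bound, and I would write out the inequality chain $G(\mathbf{u}^\star,\mathbf{v}^\star)\le L^\star$ together with the complementary bound controlling how much the rounded objective can fall below $L^\star$.

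I expect the main obstacle to be the bookkeeping that turns the gap between the LP objective $L^\star$ and the realised tile objective $G(\mathbf{u}^\star,\mathbf{v}^\star)$ into the clean factor of $2$. Concretely, one must bound the number of ``half-counted'' positions where $u^\star_i+v^\star_j=1$ against the number of fully-counted positions, and argue that the error $E=\|\mathcal{P}_\Omega(\mathbf{A})\|_2^2-G(\mathbf{u}^\star,\mathbf{v}^\star)$ cannot exceed twice $E_{\mathrm{opt}}=\|\mathcal{P}_\Omega(\mathbf{A})\|_2^2-G(\hat{\mathbf{u}},\hat{\mathbf{v}})$. The essential point, and the reason the argument transfers unchanged from the complete to the incomplete case, is that every sum ranges only over $(i,j)\in\Omega$ and the totally unimodular structure of \eqref{eqn:LPbest} is preserved under deleting rows/columns of the constraint matrix corresponding to unobserved entries; thus no term involving an entry outside $\Omega$ ever enters the objective or the constraints, and the combinatorial rounding estimate of \cite{shen2009mining} applies verbatim to the restricted index set. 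I would therefore present the proof as a careful restriction of their inequality chain, flagging at each step that the passage $\sum_{i,j}\mapsto\sum_{(i,j)\in\Omega}$ leaves the argument intact, and conclude by assembling the pieces into \eqref{eqn:erroboundlpbest}.
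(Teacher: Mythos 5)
Your proposal follows essentially the same route as the paper's proof: both bound the binary optimum from below by the LP value (feasibility of the integer point under relaxation), use total unimodularity to get an integral LP optimum with $z^\star_{ij}=u^\star_i v^\star_j$ on the zero entries (noting, as you should make explicit, that this needs the objective's pressure to minimise $z_{ij}$, not the constraints alone), and then convert the half-counted positions with $u^\star_i+v^\star_j=1$ into the factor of $2$ by bounding them against the total number of observed ones, all with sums restricted to $\Omega$ throughout. The plan is correct and matches the paper's argument.
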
 
\begin{proof}
	
	The minimal error for a single tile approximation is given by 
	
	\begin{equation}\renewcommand{\arraystretch}{2}
	\begin{array}{ll}
	\min\limits_{\tiny
		\begin{array}{l}
		\mathbf{u} \in \{0,1\}^m \\ 
		\mathbf{v} \in \{0,1\}^n 
		\end{array}} &
	||\mathcal{P}_{\Omega}(\mathbf{A} - \mathbf{u}\mathbf{v}^T)||_2^2
	
	\\
&= \min\limits_{\tiny
		\begin{array}{l}
		\mathbf{u} \in \{0,1\}^m \\ 
		\mathbf{v} \in \{0,1\}^n 
		\end{array}} 
	\sum\limits_{(i,j) \in \Omega}(A_{ij}-u_iv_j)^2 \\ 
&= \sum\limits_{(i,j) \in \Omega} A_{ij}^2 - 
	\max\limits_{\tiny
		\begin{array}{l}
		\mathbf{u} \in \{0,1\}^m \\ 
		\mathbf{v} \in \{0,1\}^n 
		\end{array}} 
	\left(\sum\limits_{(i,j):A_{ij}=1} u_iv_j -  \sum\limits_{(i,j):A_{ij}=0} u_iv_j\right) \\
&\geq \sum\limits_{(i,j) \in \Omega} A_{ij}^2 - \left(\dfrac{1}{2} \sum\limits_{(i,j):A_{ij}=1} (u^*_i+v^*_j) - \sum\limits_{(i,j):A_{ij}=0} z^*_{ij}\right)
	\end{array}
	\end{equation}
	
	where the inequality is a result of the fact that the optimal solution for Problem \eqref{eqn:IPbest} is bounded above by the objective for Problem \eqref{eqn:LPbest} since relaxing constraints does not decrease the value at optimality.

	For \((i,j)\) such that \(A_{ij}=0\), we know that \(u_i\), \(v_j\) and \(z_{ij}\) are integers. Since the objective of (\ref{eqn:LPbest}) is to minimise \(z^*_{ij}\), which is bounded below by \(u^*_i+v^*_j -1\), we have that \(z^*_{ij}=1\) if and only if \(u^*_i+v^*_j=2\).  Thus we can split the summation terms on the right hand side of the inequality to obtain
	
	\begin{equation}\label{eqn:errorboundblah}
LHS \geq \sum\limits_{\tiny (i,j) \in \Omega} A_{ij}^2 - \sum\limits_{\tiny\begin{array}{ll} (i,j):&A_{ij}=1\\&u_i+v_j=2 \end{array}} 1 -
	\sum\limits_{\tiny\begin{array}{ll}\tiny (i,j):&A_{ij}=1\\&u_i+v_j=1 \end{array}} \dfrac{1}{2} +
	\sum\limits_{\tiny\begin{array}{ll}\tiny (i,j):&A_{ij}=0\\&u_i+v_j=2 \end{array}} 1
	\end{equation}
	
	Since \(A\) is binary, we have that 
	\begin{equation}
	\sum\limits_{\tiny(i,j) \in \Omega} A_{ij}^2 \geq \sum\limits_{\tiny\begin{array}{ll} (i,j):&A_{ij}=1\\&u_i+v_j=1 \end{array}}1 + \sum\limits_{\tiny\begin{array}{ll} (i,j):&A_{ij}=1\\&u_i+v_j=2 \end{array}}1
	\end{equation}
	
	which we can use to replace the sum in \eqref{eqn:errorboundblah} corresponding to values where exactly one of \(u_i\) and \(v_j\) is 1 to derive
	
	\begin{equation}\label{eqn:pequality}\renewcommand{\arraystretch}{2}\renewcommand{\arraycolsep}{1pt}
		\begin{array}{rl}
		\min\limits_{\tiny
			\begin{array}{l}
			\mathbf{u} \in \{0,1\}^m \\ 
			\mathbf{v} \in \{0,1\}^n 
			\end{array}} 
		||\mathcal{P}_{\Omega}(\mathbf{A} - \mathbf{u}\mathbf{v}^T)||_2^2 \geq& \dfrac{1}{2}\left( \sum A_{ij}^2 - \sum\limits_{\tiny\renewcommand{\arraystretch}{1}\begin{array}{ll}\tiny (i,j):&A_{ij}=1\\&u_i+v_j=2 \end{array}} 1\right) + \sum\limits_{\tiny\renewcommand{\arraystretch}{1}\begin{array}{ll} (i,j):&A_{ij}=0\\&u_i+v_j=2 \end{array}} 1  \\
	
	\geq& \dfrac{1}{2}\left( \sum A_{ij}^2 - \sum\limits_{\tiny\renewcommand{\arraystretch}{1}\begin{array}{ll}\tiny (i,j):&A_{ij}=1\\&u_i+v_j=2 \end{array}} 1 + \sum\limits_{\tiny\renewcommand{\arraystretch}{1}\begin{array}{ll}\tiny (i,j):&A_{ij}=0\\&u_i+v_j=2 \end{array}} 1\right)  \\
	\end{array}
	\end{equation}
	Now since \(2A_{ij}-1\) is equal to  \(1\) if \(A_{ij}\) is positive and \(-1\) if \(A_{ij}\) is zero we can rewrite (\ref{eqn:pequality}) as 
	
	\begin{equation}
	\begin{array}{lll}
			\min\limits_{\tiny
				\begin{array}{l}
				\mathbf{u} \in \{0,1\}^m \\ 
				\mathbf{v} \in \{0,1\}^n 
				\end{array}} 
			||\mathcal{P}_{\Omega}(\mathbf{A} - \mathbf{u}\mathbf{v}^T)||_2^2 \geq
	&=&\dfrac{1}{2} \sum\limits_{(i,j)\in \Omega}  \left(A_{ij}^2 - \left(2A_{ij}-1\right)u_iv_j \right)\\
	&=& \dfrac{1}{2} \sum\limits_{(i,j)\in \Omega} \left(A_{ij} -u_iv_j \right)^2 = \dfrac{1}{2}E~.
	\end{array}
	\end{equation}
	This gives us our bound.
\end{proof}
Note that in the case where the true best solution has zero error, the LP relaxation will also have zero error, and therefore for a database consisting of a single planted tile, solving (\ref{eqn:LPbest}) will recover this tile provided that, for each row, the sampling operator sees a positive and a negative entry for each row or column. 

\subsection{Verifying \Cref{thm:2approx}}\label{subsec:rank1}

We first perform experiments to illustrate numerically that the 2-approximation result given in \eqref{eqn:erroboundlpbest} is not violated.

We evaluate performance relative to the optimal solution, which we obtain by solving (\ref{eqn:IPbest}) directly as an integer program. We generate binary matrices with (a) a single planted tile and (b) 3 planted tiles, with \(\tau n\) positive entries per row in each tile. We simulate noisy data by randomly flipping a fraction \(\epsilon\) of entries and remove a proportion \(1-\rho\) of entries. We set \(\tau=0.7\) and \(\epsilon=0.03, \rho=0.7\). Note that choice of parameters is to provide an illustrative example, as we observe similar behaviour for other parameter configurations. To make it tractable to solve the problem directly, we consider databases of size \(100\times100\) for the single tile case and \(10\times10\) for the \(3\) tile case, since the latter is more computationally intensive to solve. In both cases, we calculate the ratio, \(\mathcal{R}\), of squared \(l_2\) approximation error to the optimal squared \(l_2\) error. The mean value of \(\mathcal{R}\) and the proportion of cases for which \(\mathcal{R}\) is greater than \(1\) is recorded in \Cref{tab:2approx}.

We solve the LP relaxation using the simplex method implemented by MATLAB's linprog solver \cite{dantzig1955generalized}. In addition, we compare against other methods, \emph{average, partition}, and \emph{nmf} for generating a rank-1 approximation, further details of which can be found in \Cref{sec:numerical}. We observe that while the other methods all violate the 2-approximation bound for the 3-tile model, the LP does not.

\begin{minipage}{0.45\linewidth}
\begin{tabular}{c|c|c|cllll}

\cline{2-3}
& \cellcolor[HTML]{EFEFEF}Mean \(\mathcal{R}\)
& \cellcolor[HTML]{EFEFEF}\(P_0\)
& \\ 

\cline{1-3}
\multicolumn{1}{|c|}{\cellcolor[HTML]{EFEFEF}\begin{tabular}[c]{@{}c@{}}LP\\ (AM)\end{tabular}}        
& \begin{tabular}[c]{@{}c@{}}1.0\\\bf{(1.0)}\end{tabular} 
& \textbf{\begin{tabular}[c]{@{}c@{}}0\\(0)\end{tabular}}& \\

 \cline{1-3}
\multicolumn{1}{|c|}{\cellcolor[HTML]{EFEFEF}\begin{tabular}[c]{@{}c@{}}NMF\\ (AM)\end{tabular}}       
& \begin{tabular}[c]{@{}c@{}}0.0\\ \bf{(0.0)}\end{tabular} & \begin{tabular}[c]{@{}c@{}}0.0\\ \bf{(0.0)}\end{tabular} & \\

 \cline{1-3}
\multicolumn{1}{|c|}{\cellcolor[HTML]{EFEFEF}\begin{tabular}[c]{@{}c@{}}Partition\\ (AM)\end{tabular}} 
& \begin{tabular}[c]{@{}c@{}}2.42\\ (1.26)\end{tabular} & \begin{tabular}[c]{@{}c@{}}0.87\\ \bf{(0.0)}\end{tabular} &\\ 

\cline{1-3}
\multicolumn{1}{|c|}{\cellcolor[HTML]{EFEFEF}\begin{tabular}[c]{@{}c@{}}Avg\\ (AM)\end{tabular}}
& \begin{tabular}[c]{@{}c@{}}2.56\\ (1.32)\end{tabular} & \begin{tabular}[c]{@{}c@{}}0.62\\ (0.0)\end{tabular} & \\ 
\cline{1-3}
\end{tabular}
\captionof{table}{Mean value of \(\mathcal{R}\) and \(P_0\), the proportion of cases test cases for which \(\mathcal{R}>2\), for matrices with a single planted tile, \(m=100\).}
\end{minipage}\hfill
\begin{minipage}{0.45\linewidth}
\begin{tabular}{c|c|c|cllll}
\cline{2-3}
                                                                       & \cellcolor[HTML]{EFEFEF}Mean \(\mathcal{R}\) & \cellcolor[HTML]{EFEFEF}\(P_0\)                          &           &           &  &  &  \\ \cline{1-3}
\multicolumn{1}{|c|}{\cellcolor[HTML]{EFEFEF}\begin{tabular}[c]{@{}c@{}}LP\\ (AM)\end{tabular}}        & \begin{tabular}[c]{@{}c@{}}1.06\\ \(\mathbf{(1.04)}\)\end{tabular} & \textbf{\begin{tabular}[c]{@{}c@{}}0\\ (0)\end{tabular}} & \textbf{} & \textbf{} &  &  &  \\ 

\cline{1-3}
\multicolumn{1}{|c|}{\cellcolor[HTML]{EFEFEF}\begin{tabular}[c]{@{}c@{}}NMF\\ (AM)\end{tabular}}       & \begin{tabular}[c]{@{}c@{}}2.46\\ (1.21)\end{tabular} & \begin{tabular}[c]{@{}c@{}}0.43\\ (0.01)\end{tabular}    &           &           &  &  &  \\ 

\cline{1-3}
\multicolumn{1}{|c|}{\cellcolor[HTML]{EFEFEF}\begin{tabular}[c]{@{}c@{}}Partition\\ (AM)\end{tabular}} & \begin{tabular}[c]{@{}c@{}}2.94\\ (1.37)\end{tabular} & \begin{tabular}[c]{@{}c@{}}0.58\\ (0.12)\end{tabular}    &           &           &  &  &  \\ \cline{1-3}
\multicolumn{1}{|c|}{\cellcolor[HTML]{EFEFEF}\begin{tabular}[c]{@{}c@{}}Avg\\ (AM)\end{tabular}}       & \begin{tabular}[c]{@{}c@{}}2.96\\ (1.21\end{tabular} & \begin{tabular}[c]{@{}c@{}}0.74\\ (0.06)\end{tabular}     &           &           &  &  &  \\ \cline{1-3}
\end{tabular}
\label{tab:2approx}
\captionof{table}{Mean value of \(\mathcal{R}\) and proportion of cases test cases for which \(\mathcal{R}>2\), for matrices with 3 planted tiles with density \(\tau=0.7\) with \(m=10\).}
\end{minipage}



    \section{Recovery guarantees for TBMC}\label{sec:rank_k}

The results of \Cref{sec:approximation} hold for any binary matrix and give guarantees for solving the rank-1 problem. We now explore the conditions for which \Cref{alg:ours} will recover a database generated according to a planted tile model. We consider the following model for the underlying data: let  \(\mathbf{A} \in \mathbb{B}^{m\times m}\) have a symmetric block diagonal structure, with the size of the \(l^{th}\) tile equal to \((m\tau_l)^2\) where \(\tau_{l+1}\leq \tau_l\) as illustrated in \Cref{fig:rank_k_block_diagonal}. 

\begin{minipage}[t]{0.45\linewidth}
    \centering
    \includegraphics[width=0.6\linewidth]{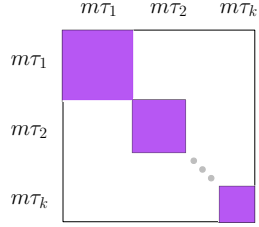}
    \captionof{figure}{We consider recovery guarantees for case of a block diagonal model with decreasing tile size.}
    \label{fig:rank_k_block_diagonal}
\end{minipage}\hfill
\begin{minipage}[t]{0.45\linewidth}
    \centering
    \includegraphics[width=0.7\linewidth]{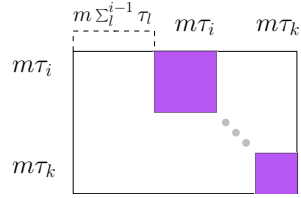}
    \captionof{figure}{On the \(i^{th}\) iteration of \Cref{alg:ours}, empty columns can be ignored, as setting \(\beta_j=1\) for \(j<i\) will never increase the objective.}
    \label{fig:my_label}
\end{minipage}

The heart of the proof is in showing that, in the case of no missing data, solving Problem (\ref{eqn:LPbest}) recovers the largest tile. We then show that for the case of no missing data, if the algorithm recovers the first \(i\) tiles then it will recover the \((i+1)^{th}\) tile and conclude. Finally, we show numerically that the performance for the case of missing data is in close agreement with the theoretical result for no missing data. We define \(T_l\) to be the set of indices in the \(l^{th}\) tile and 
$$S_{l,l'}=\{(i,j):i\in T_l,j\in T_{l'}\}.$$
We rely on the following observations:
	\begin{lemma}
 We can rewrite the objective of (\ref{eqn:LPbest}) as
	
	\begin{equation}\label{eqn:LPbest_rewrite}\renewcommand{\arraystretch}{0.5}
	\begin{array}{lll}
f(\mathbf{u},\mathbf{v})&=&\displaystyle	 \sum\limits_{l} \left(	\sum\limits_{\begin{array}{c}(i,j)\in S_{l,l}\\u_i+v_j=1\end{array}}
	\frac{1}{2} +		\sum\limits_{\begin{array}{c}(i,j)\in S_{l,l}\\u_i+v_j=2\end{array}}1 \right)\\
	&&\\
&& -\displaystyle 
	\sum\limits_{l\neq l'}	\sum\limits_{\begin{array}{c}(i,j)\in S_{l,l'}\\u_i+v_j=2\end{array}} 1.

	    \end{array}
	\end{equation}

	\end{lemma}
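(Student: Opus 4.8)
The plan is to exhibit $f(\mathbf{u},\mathbf{v})$ as the objective of (\ref{eqn:LPbest}) after the minimising choice of the slack variables $z_{ij}$ has been substituted in, and then to regroup the resulting sums according to the block structure $S_{l,l'}$. Since the stated right-hand side depends only on $(\mathbf{u},\mathbf{v})$, the first thing I would make explicit is that for fixed binary $\mathbf{u},\mathbf{v}$ the optimal $z$ is already determined: this is exactly the observation established in the proof of \Cref{thm:2approx}, namely that total unimodularity forces $u_i,v_j,z_{ij}$ to be integral, and that minimising $z_{ij}$ subject to $z_{ij}\ge u_i+v_j-1$ gives $z_{ij}=1$ precisely when $u_i+v_j=2$ and $z_{ij}=0$ otherwise. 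Substituting this leaves an objective that is a function of $(\mathbf{u},\mathbf{v})$ alone, which is what we call $f$.

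Next I would use the block-diagonal hypothesis to translate the conditions $A_{ij}=1$ and $A_{ij}=0$ into membership of the index sets $S_{l,l}$ and $S_{l,l'}$. By the symmetric block-diagonal assumption, $A_{ij}=1$ exactly when $i$ and $j$ lie in the same tile, i.e. $(i,j)\in\bigcup_l S_{l,l}$, and $A_{ij}=0$ exactly when they lie in different tiles, i.e. $(i,j)\in\bigcup_{l\neq l'} S_{l,l'}$. This lets me replace the sums over $\{A_{ij}=1\}$ and $\{A_{ij}=0\}$ by sums over the diagonal and off-diagonal blocks respectively. Since this section assumes no missing data, $\Omega$ is the full index set and every pair $(i,j)$ falls into exactly one $S_{l,l'}$, so the regrouping is a genuine partition of the summation domain.

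Then I would split the positive-entry contribution $\tfrac12\sum_{A_{ij}=1}(u_i+v_j)$ according to the three possible values of $u_i+v_j$: the term with $u_i+v_j=0$ contributes nothing, the term with $u_i+v_j=1$ contributes $\tfrac12$ each, and the term with $u_i+v_j=2$ contributes $1$ each. Restricting to $S_{l,l}$ yields the first bracketed double sum of (\ref{eqn:LPbest_rewrite}). Likewise, the substituted term $-\sum_{A_{ij}=0} z_{ij}$ becomes $-\sum_{l\neq l'}\sum_{(i,j)\in S_{l,l'},\,u_i+v_j=2}1$, which is the final sum. Collecting these gives the claimed identity.

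I do not expect a genuine obstacle here: the content is bookkeeping, and the only point requiring care is the $z$-elimination, which must be phrased as ``the objective evaluated at the $z$ that is optimal for the given $(\mathbf{u},\mathbf{v})$'' rather than at an arbitrary feasible $z$ — otherwise $f$ would not be a function of $(\mathbf{u},\mathbf{v})$ alone. The remaining work is purely the reindexing of a finite sum over a partitioned domain.
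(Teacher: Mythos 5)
Your proof is correct and takes essentially the same route as the paper's: eliminate the slack variables via the optimal integral choice $z_{ij}=u_iv_j$, identify $A_{ij}=1$ with the diagonal blocks $S_{l,l}$ and $A_{ij}=0$ with the off-diagonal blocks $S_{l,l'}$, and split each sum according to the value of $u_i+v_j$. Your explicit care that $z$ must be substituted at its value optimal for the given $(\mathbf{u},\mathbf{v})$ (rather than an arbitrary feasible $z$) is the same observation the paper states more tersely as ``the constraints enforce $z_{ij}=u_iv_j$.''
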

	
	\begin{proof}
	
The constraints of (\ref{eqn:LPbest}) enforce the condition that \(z_{ij}=u_iv_j\). Hence, if we split the second term of the objective of Problem (\ref{eqn:LPbest}) by the values of \(u_i+v_j\) the only terms that will have a contribution are the ones for which \(u_i+v_j=2\). The rest follows by splitting the summation terms according to the values of \(u_i+v_j\). 

\end{proof}

\begin{lemma}\label{lem:alpha_beta}
Let \(\alpha_l,\beta_l\) be the proportion of rows and columns of tile \(l\) included in a feasible solution to (\ref{eqn:LPbest}). Then we can write (\ref{eqn:LPbest_rewrite}) as

\begin{equation}\label{eqn:proportion_objective}
\begin{array}{ll}

    f(\mathbf{\alpha},\mathbf{\beta})= 
  &m^2\displaystyle \sum\limits_l \tau_l^2 \left\{\frac{1}{2} \left[ 
    \alpha_l(1-\beta_l)+(1-\alpha_l)\beta_l \right] + \alpha_l\beta_l\right\}\\
    
    & - \displaystyle m^2 \sum\limits_{l=1}^{k}\sum\limits_{\centering \begin{array}{l} _{l'=1} \\_{l'\neq l}\end{array}}^k \alpha_{l'}\beta_l\tau_{l'}\tau_l .

    \end{array}
\end{equation}

 In addition, there is an optimal solution for which \(\alpha_l,\beta_l\in\{0,1\}\) for each tile. 
\end{lemma}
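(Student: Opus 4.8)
The plan is to treat the two assertions separately: first an exact counting argument that turns \eqref{eqn:LPbest_rewrite} into \eqref{eqn:proportion_objective}, and then a multilinearity argument that places an optimum at the corners of the proportion box.

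For the rewriting, I would begin from the block diagonal structure of $\mathbf{A}$: a pair $(i,j)$ has $A_{ij}=1$ precisely when $i$ and $j$ lie in the same tile, i.e. $(i,j)\in S_{l,l}$ for some $l$, and $A_{ij}=0$ exactly when $(i,j)\in S_{l,l'}$ with $l\neq l'$. This is why \eqref{eqn:LPbest_rewrite} splits cleanly into a within-tile reward and a cross-tile penalty. Fixing a binary feasible $(\mathbf{u},\mathbf{v})$, there are $\alpha_l m\tau_l$ rows of tile $l$ with $u_i=1$ and $\beta_l m\tau_l$ columns with $v_j=1$, since the side length of tile $l$ is $m\tau_l$. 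Counting index pairs in each block then gives, on the diagonal block $S_{l,l}$, exactly $\alpha_l\beta_l(m\tau_l)^2$ pairs with $u_i+v_j=2$ and $[\alpha_l(1-\beta_l)+(1-\alpha_l)\beta_l](m\tau_l)^2$ pairs with $u_i+v_j=1$, and on an off-diagonal block $S_{l,l'}$ exactly $\alpha_l\beta_{l'}m^2\tau_l\tau_{l'}$ pairs with $u_i+v_j=2$. Substituting these counts into \eqref{eqn:LPbest_rewrite} and relabelling the dummy indices $l\leftrightarrow l'$ in the cross term yields \eqref{eqn:proportion_objective}; this step is routine, the only thing to watch being the index bookkeeping in the double sum.

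For the integrality claim, the key observation is that the right-hand side of \eqref{eqn:proportion_objective} is \emph{multilinear}: it is affine in each $\alpha_l$ and each $\beta_l$ when the remaining proportions are held fixed. Indeed, expanding the diagonal contribution collapses it to $\tfrac12(\alpha_l+\beta_l)$, so the only products that survive are the cross terms $\alpha_{l'}\beta_l$ with $l'\neq l$, which couple distinct tiles and hence never multiply a variable by itself. I would extend $f$ to the continuous box $[0,1]^{2k}$ and argue coordinate by coordinate: holding every other coordinate fixed, $f$ is affine in the chosen variable, so its maximum over $[0,1]$ is attained at $0$ or $1$, and that coordinate may be pushed to an endpoint without decreasing $f$; iterating over all $2k$ coordinates produces a point of $\{0,1\}^{2k}$ at which $f$ is at least as large. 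Since every such corner corresponds to a genuine binary feasible solution (take all or none of a tile's rows and columns) and the achievable proportions form a subset of the box, the maximum over feasible solutions is attained with $\alpha_l,\beta_l\in\{0,1\}$ for every $l$.

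The routine counting is not where the difficulty lies; the substantive step is the second one. The main obstacle is to justify cleanly that passing to the continuous relaxation of the proportions is lossless, i.e. that a corner maximiser of the multilinear extension is both realisable as an honest $(\mathbf{u},\mathbf{v})$ and no worse than any intermediate configuration. Making the multilinearity explicit through the simplification of the diagonal term to $\tfrac12(\alpha_l+\beta_l)$ is what renders the coordinatewise exchange argument manifestly valid, so I would present that simplification first.
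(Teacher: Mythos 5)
Your proposal is correct and follows essentially the same route as the paper: count pairs block by block using the fact that rows (columns) within a tile are interchangeable to obtain \eqref{eqn:proportion_objective}, then use affineness in each $\alpha_l$ and $\beta_l$ separately to push an optimum to a corner of the box. Your explicit collapse of the diagonal contribution to $\tfrac12(\alpha_l+\beta_l)$ and the coordinatewise exchange over $[0,1]^{2k}$ merely make precise the paper's terser ``linear in each $\alpha_l$'' argument, so no substantive difference.
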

\begin{proof}This follows from \Cref{eqn:LPbest_rewrite}. The objective can be split into sums by row; rows in the same tile will have identical contribution to the objective, leading to the formulation in (\ref{eqn:proportion_objective}). Since this expression is linear in the \(\alpha_l\) for each \(l\), we can conclude that there must be an optimal solution for either \(\alpha_l=0\) or \(\alpha_l=1\); to determine which requires determining  the sign of the coefficient of each \(\alpha_l\): if it is positive then \(\alpha_l=0\) is optimal, and if it is negative then \(\alpha_l=1\) is optimal. The same reasoning applies to each \(\beta_l\).

\end{proof}

We analyse tile recovery for the case where all entries are known, obtaining a phase transition for recovery in terms of relative tile sizes, and then show that, in practice, we get agreement with this phase transition for sub-sampled matrices, provided the tiles are large enough. 

\begin{theorem}
           For a binary matrix \(\mathbf{A}\) with a symmetric block diagonal structure with \(k\) tiles, having size \(m\tau_l^2\) where \(\tau_{l+1}=a\tau_l\), for  \(a\leq1/\sqrt{2}\),
           \Cref{alg:ours} recovers the tiles exactly in the case where all entries are known (in this case \Cref{alg:ours} reduces to the PROXIMUMS algorithm \cite{koyuturk2006nonorthogonal}).%
\end{theorem}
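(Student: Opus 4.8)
The plan is to run an induction on the tile index whose crux is a single claim: for a symmetric block-diagonal matrix whose tile side-lengths $\tau_1>\tau_2>\cdots$ decrease geometrically with ratio $a\le 1/\sqrt2$, the rank-one LP \eqref{eqn:LPbest} has a unique optimum, namely $\alpha=\beta=\mathbf{e}_1$ (select exactly the rows and columns of the largest tile). \Cref{lem:alpha_beta} already reduces the LP to maximising the objective \eqref{eqn:proportion_objective} over $\alpha,\beta\in\{0,1\}^k$, so the claim is a finite combinatorial optimisation. First I would simplify \eqref{eqn:proportion_objective} using the identity $\tfrac12[\alpha_l(1-\beta_l)+(1-\alpha_l)\beta_l]+\alpha_l\beta_l=\tfrac12(\alpha_l+\beta_l)$, which rewrites the objective, up to the factor $m^2$, as the diagonal reward $\tfrac12\sum_l\tau_l^2(\alpha_l+\beta_l)$ minus the nonnegative cross-penalty $\sum_{l\ne l'}\alpha_{l'}\beta_l\,\tau_{l'}\tau_l$.

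To prove the claim I would exploit the coordinate-linearity noted in \Cref{lem:alpha_beta}: for a fixed column set $C=\{l:\beta_l=1\}$ the coefficient of $\alpha_{l'}$ is $m^2\tau_{l'}\bigl(\tfrac12\tau_{l'}-\sum_{l\in C,\,l\ne l'}\tau_l\bigr)$, so $\alpha_{l'}=1$ is optimal exactly when $\tfrac12\tau_{l'}>\sum_{l\in C,\,l\ne l'}\tau_l$, and symmetrically for each $\beta_l$. Taking $C=\{1\}$ shows that $\alpha=\beta=\mathbf{e}_1$ is a fixed point of coordinate ascent with objective $\tau_1^2m^2$. The substantive part is to upgrade this to a global statement, and this is where I expect the hypothesis to enter. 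The natural reading of $a\le 1/\sqrt2$ is the area-domination inequality $\tau_1^2\ge\sum_{j\ge2}\tau_j^2$, which is equivalent to it because $\sum_{j\ge2}\tau_j^2=\tau_1^2\,a^2/(1-a^2)$. I would use this to bound the total positive (diagonal) contribution any competing configuration can extract from the smaller blocks and show it cannot offset the cross-penalties incurred against the largest tile, so that $\tau_1^2m^2$ is the strict global maximum. Dropping the penalty entirely already gives the crude bound $f\le m^2\sum_l\tau_l^2\le 2\tau_1^2m^2$, so the work is precisely in accounting for the cross-penalties well enough to tighten the factor $2$ down to exact recovery.

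With the claim in hand the induction is routine. Applied to $\mathbf{A}$ itself it shows the first call of \Cref{alg:ours} returns $\mathbf{u}^*,\mathbf{v}^*$ equal to the indicators of the rows and columns of tile $1$; since every row placed in $\mathbf{B}_1$ is identical to $\mathbf{v}^*$, the Hamming-radius test passes (distance $0<t$) and tile $1$ is committed, while $\mathbf{B}_0$ collects the rows of tiles $2,\ldots,k$. The key observation (\Cref{fig:my_label}) is that, after deleting the now-empty columns of tile $1$, $\mathbf{B}_0$ is again block-diagonal with side-lengths $\tau_2>\cdots>\tau_k$ of the same ratio $a$; the empty columns can be ignored because setting $\beta_j=1$ on a zero column only adds to the negative penalty and never to the reward, so an optimal solution leaves them unselected. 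The inductive hypothesis therefore applies verbatim and the next call returns tile $2$; iterating $k$ times commits every tile and yields $\mathbf{U}\mathbf{V}^T=\mathbf{A}$ exactly.

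The main obstacle is the global-optimality step of the claim. Because \eqref{eqn:proportion_objective} is only linear in each coordinate separately, a configuration that resists every single-coordinate flip need not be globally optimal, so ruling out ``staircase'' selections that switch on several tiles at once cannot be done by the coordinate signs alone. This is exactly the step for which geometric decay is needed: it is what lets me dominate the aggregate diagonal gain from all the smaller tiles by the largest tile's area and thereby close the comparison against every multi-tile configuration, and I expect $a\le 1/\sqrt2$ to appear as the threshold of that domination bound.
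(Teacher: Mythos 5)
Your setup coincides with the paper's: reduce via \Cref{lem:alpha_beta} to a finite optimisation over $\alpha,\beta\in\{0,1\}^k$, observe that $\alpha=\beta=\mathbf{e}_1$ achieves $m^2\tau_1^2$, handle the empty columns created by earlier partitions by noting their $\beta$-coefficients are non-positive, and induct on the tile index. Your simplification of the diagonal reward to $\tfrac12\sum_l\tau_l^2(\alpha_l+\beta_l)$ is correct and actually cleaner than the form the paper carries. But the one step that constitutes the theorem --- that $\mathbf{e}_1,\mathbf{e}_1$ is the \emph{global} maximiser and not merely a coordinate-ascent fixed point --- is precisely the step you leave open: you say you ``would'' use the area-domination inequality to rule out multi-tile configurations, and your closing paragraph concedes you do not see how to beat the staircase selections. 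That is a genuine gap, and it is the heart of the proof.

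The paper closes it with a device you had in your hands but did not deploy: condition on the pair $(\alpha_1,\beta_1)\in\{0,1\}^2$ and optimise the remaining coordinates within each of the four cases. Once $\beta_1=1$, your own coefficient formula gives, for every $j>1$ and \emph{uniformly over the remaining} $\beta_l\ge 0$, a coefficient of $\alpha_j$ equal to $m^2\tau_j\bigl(\tfrac12\tau_j-\sum_{l\ne j}\beta_l\tau_l\bigr)\le m^2\tau_j\bigl(\tfrac12\tau_j-\tau_1\bigr)<0$: the cross-penalty against the largest tile alone swamps the diagonal reward, so within the case $\alpha_1=\beta_1=1$ the optimum is forced to $\mathbf{e}_1,\mathbf{e}_1$ with value $m^2\tau_1^2$. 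The same one-sided bound kills every $\beta_j$, $j>1$, in the case $\alpha_1=1,\beta_1=0$, after which no cross-penalties remain and the value is at most $\tfrac12 m^2\sum_l\tau_l^2$; the case $\alpha_1=\beta_1=0$ is bounded by dropping the penalty, giving at most $m^2\sum_{l>1}\tau_l^2$. Comparing the four case-optima yields exactly $\tau_1^2>\sum_{l>1}\tau_l^2$, which for the geometric model becomes $2a^2-a^{2(k-i+1)}<1$ and holds for $a\le 1/\sqrt2$. So staircase configurations are excluded not by a global counting argument but by the observation that touching tile $1$ on one side forces every other coordinate on the opposite side to zero. A minor further point: your claimed equivalence between $a\le 1/\sqrt2$ and $\tau_1^2\ge\sum_{j\ge 2}\tau_j^2$ uses the infinite geometric series; for finite $k$ the condition $a\le1/\sqrt2$ is only sufficient, which is all the theorem asserts.
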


\begin{proof}
Using Lemma~\ref{lem:alpha_beta}, we may consider four cases depending on the integer values of \(\alpha_1,\beta_1\). 

Case (i): For \(\alpha_1=\beta_1=1\), we calculate

\begin{equation}
\begin{array}{lll}
\dfrac{1}{m^2} \dfrac{\partial f}{\partial \alpha_j}&= \frac{1}{2}\tau_j^2-\tau_j\tau_1- \sum\limits_{\begin{array}{l}_{l\neq j} \\ _{l >1}\end{array}} \tau_j\tau_l~\text{ for }j>1 \\
&\leq \frac{1}{2}\tau_j^2-\tau_j\tau_1
\end{array}
\end{equation}

This is strictly negative, since \(\tau_1>\tau_j\), hence \(\alpha_j=0\) for all \(j>1\). Similarly, \(\beta_{j}=0\) for all \(j>1\). The corresponding objective value is \(m^2\tau_1^2\).

Cases (ii) and (iii): For \(\alpha_1=1\), \(\beta_1=0\), 

\begin{equation}
\dfrac{1}{m^2} \dfrac{\partial f}{\partial \beta_j}\leq \frac{1}{2}\tau_j^2-\tau_j\tau_1~\text{ for }j>1~.
\end{equation}

Hence \(\beta_j=0\) for all \(j>1\). We can then substitute this to find

\begin{equation}
\begin{array}{ll}
f(\mathbf{\alpha},\mathbf{\beta})=& \frac{1}{2} m^2\tau_1^2\left(\tau_1^2+\displaystyle\sum_{l>1}\alpha_l\tau_l^2\right).
\end{array}
\end{equation}

So for optimality in this case, \(\alpha_l=1,\beta_l=0\) for all \(l\). By symmetry, for \(\alpha_1=0\), \(\beta_1=1\), optimality in this case is obtained for \(\alpha_l=0,\beta_l=1\) for all \(l\). In both cases, the objective value is \(f=\frac{1}{2}m^2\sum_l\tau_l^2\).

Case (iv): For \(\alpha_1=0\), \(\beta_1=0\), the objective is at most \(m^2 \sum_{l>1} \tau_l^2/2\). To see this, consider that the \(l^{th}\) term of the first summation is increasing in \(\alpha_l\) and \(\beta_l\), so obtains its maximum for \(\alpha_l=\beta_l=1\), and the second summation is always negative. 

We can now compare the objectives for each case. Case (i) is optimal over the other cases, provided

\begin{equation}
\sum_{l>1}\tau_1^2 >\frac{1}{2}\sum_l\tau_l^2,
\end{equation}

which is true if and only if

\begin{equation}\label{eqn:condtau}
\tau_1^2>\sum\limits_{l>1}\tau_l^2~.
\end{equation}

Since this is a strict inequality, the optimal solution will be unique. 

Note that on subsequent iterations of step 2 of~\Cref{alg:ours}, the input matrix is no longer square, but has dropped out the rows corresponding to the largest \(i\) tiles.  The resulting matrix contains the remaining \(k-i\) tiles and empty columns corresponding to the dropped tiles. Hence we are no longer tracking the \(\alpha_j\) for \(j\leq i\). Therefore, we rewrite the objective as

\begin{equation}
\begin{array}{ll} \displaystyle
f(\mathbf{\alpha},\mathbf{\beta})= &m^2\sum\limits_{l=i+1}^{k} \tau_l^2 \left\{\frac{1}{2} \left[ 
\alpha_l(1-\beta_l)+(1-\alpha_l)\beta_l \right] + (1-\alpha_l)(1-\beta_l)\right\}\\%
&\displaystyle - m^2\sum\limits_{l=i+1}^k\sum\limits_{\renewcommand{\arraystretch}{0.5}\begin{array}{l}	_{l'=i+1}\\_{l'\neq l}
\end{array}}^k\alpha_{l'}\beta_l\tau_{l'}\tau_l - 
m^2\sum\limits_{l=1}^i\sum\limits_{\renewcommand{\arraystretch}{0.5} \begin{array}{l}	_{l'=i+1}\\_{l'\neq l}
	\end{array}}^k\alpha_{l'}\beta_l\tau_{l'}\tau_l 
\end{array}
\end{equation}

where we have dropped out the terms corresponding to the dropped rows, and isolated terms corresponding to the empty columns (\(l\leq i \)). In this case, the coefficient of \(\beta_j\) for \(j\leq i\) will be non-positive, and negative provided at least one of the \(\alpha_j\) is positive for \(j>i\). This means we can consider optimality for the reduced matrix formed by dropping out the columns corresponding to the first \(i\) tiles and therefore if
\begin{equation}\label{eqn:condtau}
\tau_i^2>\sum\limits_{l>i}\tau_l^2~.
\end{equation}

we have that at each iteration, the algorithm will successfully identify the right tile, and therefore \ref{alg:ours} will output the exact tiling after \(k\) iterations. 

In particular, if we consider the geometric model for tile size, \(\tau_{l+1}=a\tau_l\), then  imposing (\ref{eqn:condtau}) leads to the condition 

\begin{equation}\renewcommand{\arraystretch}{1.5}
\begin{array}{lll}
    1&>& \sum_{l=1}^{k-i} a^{2l}=a^2\sum_{l=1}^{k-i} a^{2(l-1)} =a^2\dfrac{1-a^{2(k-i)}}{1-a^2}
    \end{array}
\end{equation}

for all $0\le i\le k-1$, from which we obtain 

\begin{equation}\label{eqn:conda}
    2a^2 -a^{2(k-i+1)}<1
\end{equation}

In particular, for \(a\leq1/\sqrt{2}\) this requirement is satisfied for all \(i\).

\end{proof}

\subsection{Observed behaviour with missing entries}

Now suppose that entries are erased independently with probability $0\le\rho<1$. We are interested in whether the phase transition identified above can be extended to this case. For a given realisation, we can no longer consider each row as identical, hence the theoretical result in this section does not precisely extend. However, our numerical observations suggest that we do observe a behaviour which is a close approximation, provided the database size is large enough.

We generate symmetric block-diagonal matrices with decreasing tile size according to the model in \Cref{sec:rank_k}, setting \(k=4\), for \(a=0.1:1\) and \(\rho=0.1\) to \(0.9\). For 100 random trials, we calculate the proportion of matrices recovered exactly from \Cref{alg:ours} and plot the results in the top row of  \Cref{fig:error_bin}. We also plot, in the bottom row of \Cref{fig:error_bin}, the proportion for which \Cref{alg:ours} is able to recover all but \(3\%\) of the entries; accounting for the difficulty of recovery of the smallest tiles. According to (\ref{eqn:conda}), we would expect to see recovery for \(a\leq 0.72\). In both cases, we see that there is agreement with our bound on \(a\), and that as we increase the size of our database, the value of maximum \(\rho\) for which the algorithm recovers all of the tiles also increases. The small gap between the dashed line indicating \(a=0.72\) and the line of failure to recover is an artefact of discretisation error for smaller databases; it is not possible to generate tiles that are exactly a factor of \(0.72\) smaller than the previous, since a row is either included or not.

\begin{minipage}{\textwidth}
\centering
\includegraphics[width=0.8\linewidth]{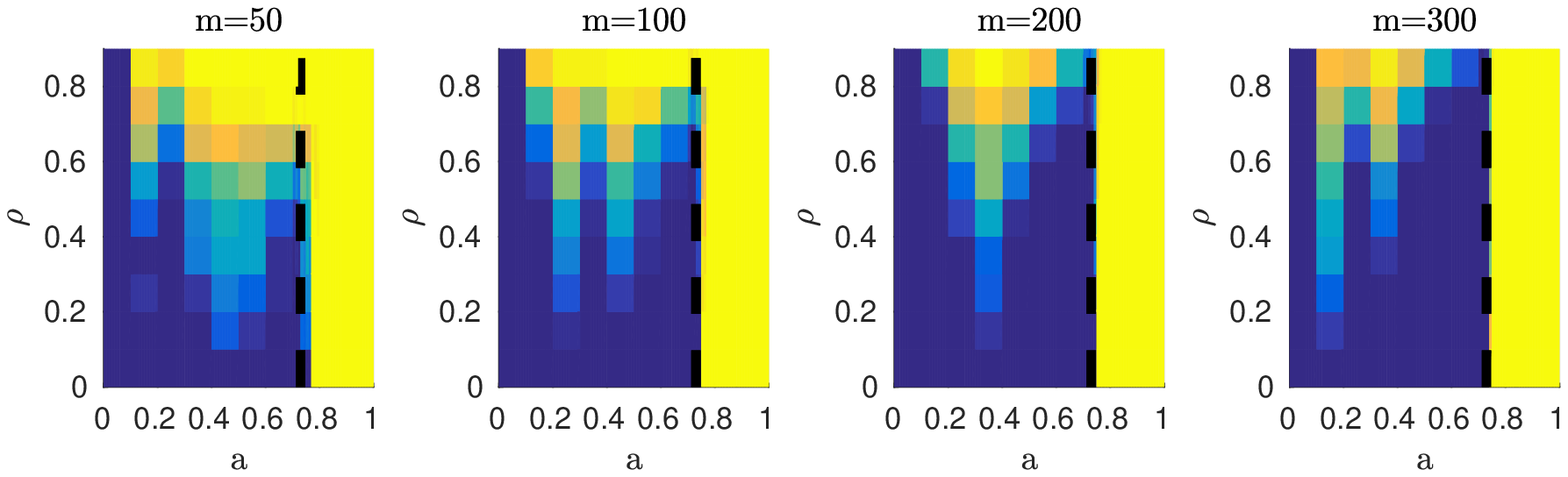}
\includegraphics[width=0.8\linewidth]{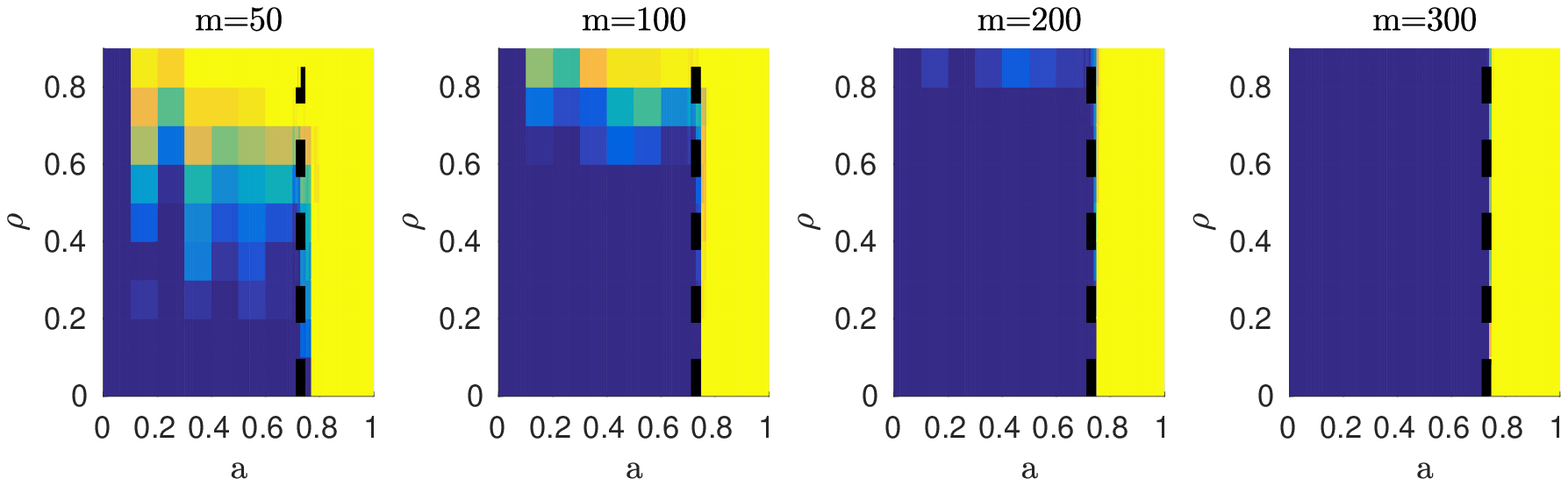}
\captionof{figure}{Proportion of matrices, generated according to \Cref{sec:rank_k}, which are (top) recovered by TBMC and (bottom) for which 97\% of the entries are recovered (bottom). Dark blue regions are fully recovered. The black dashed line indicate the theoretical bound on \(a\). }
\label{fig:error_bin}
\end{minipage}

	\section{Numerical Results} \label{sec:numerical}

We next demonstrate that the partitioning approach, and in particular our TBMC algorithm is a practical method for generating interpretable rules for inferring missing information in real data sets (recommender systems), that outperforms state of the art alternative methods for binary matrix completion.

\subsection{Other algorithms}

We compare \(\Cref{alg:ours}\) to other methods for rank-\(k\) approximation: two alternative heuristics for partitioning similar to those used in \cite{koyuturk2006nonorthogonal}; NMF with missing values implemented using the NMF MATLAB Toolbox \cite{qi2009non}, based on the multiplicative scheme of \cite{lee1999learning} with zero weighting of missing entries; and a method that follows a \(k\)-means approach to cluster the observed data based on projections onto the \(k\) largest singular vectors of the observed data. In particular we consider \begin{itemize}
	\item \emph{`average'}: setting \(v_j=0\) where the \(j^{th}\) column has more than half of its entries as positive and \(u_i=1\) if any of the entries of the \(i^{th}\) row have a positive entry in one of those columns ;
	\item \emph{`partition'}: selecting a column at random to be \(\mathbf{u}\) and calculating \(\mathbf{v}\) as the average of all the rows that have positive entries in that column, binarised with a threshold of \(1/2\);
	\item \emph{`NMF'}: a non-negative rank-one factorisation is obtained using a multiplicative update scheme with missing data and the factors are binarised using a threshold of \(0.5\). We use the approach outlined in \cite{zhang2010binary} to normalize the factors such that the 0.5 threshold is appropriate.
	\item \emph{`Spectral'}: We obtain a rank-one approximation by implementing the method outlined in \cite{xu2014jointly}, which follows a k-means approach. Briefly, the factors are initially generated using a projection onto the first \(k\) singular values; viewing the factorisation as a clustering,  the footprints in \(\mathbf{V}_i\) are updated using the clustering given by \({U}_i\), before reassigning rows to the cluster whose footprint most closely matches their own.
\end{itemize}

For TBMC, \emph{average} and \emph{partition} we also consider using an alternating minimisation step, outlined in \Cref{subsec:altmin} to improve the rank-one approximations. We refer to these approaches as TBMC-AM, \emph{average}-AM and \emph{partition}-AM.

\subsection{Real world datasets}

We consider performance on a  series of real world recommender systems. It is worth pointing out that whilst many of the relevant datasets have non-binary entries, often categorical ratings for example, the aim of the recommender problem is to decide whether or a particular entry will be a positive interaction or not (i.e. whether a user will like a particular film, or a drug will bind to a particular target). Hence it is necessary to make decisions about how these datasets are binarised. 

In all of the datasets we consider, we find optimum performance for partition based methods, and in particular  for four of the five data sets that we investigate, we find that TBMC outperforms other state of the art methods upon this task. 

We consider the performance of rank-$k$ binary matrix factorisation on the following datasets.

\begin{itemize}
\item ChEMBL:  Data of protein-ligand interactions, obtained from ChEMBL \cite{bento2014chembl}, version 21. Assay values for measurement types POTENCY, IC50, KI, MIC, EC50, KD, AC50 were binarised using a threshold of 2\(\mu\)M to obtain positive and negative activity categories, where negative represents no discernible effect on protein behaviour. The resulting matrix was then filtered by counting the positive and negative interactions, ranking the compounds based on the total number of interactions and taking subsets from the top 50000 ligands. 

 \item ML: MovieLens $100$k \cite{MovieLens}, which contains $100,000$ ratings from $943$ users across $1682$ films. We threshold to consider only $4$ or $5$ star ratings as positive.
 
  \item RC: customer reviews from $138$ customers for $130$ restaurants \cite{RC}. Since ratings are in $\{1,2,3\}$, we map to binary data by converting $3$ star ratings to one, and converting $1$ or $2$ star ratings to zero. 
 
 \item ALL-AML (GE): gene expression data \cite{golub1999molecular}, containing information from human tumour samples of different leukemia types. This dataset has been widely used for cancer classification, and for the evaluation of BMF techniques in the case of no missing data \cite{brunet2004metagenes}. We normalize the data by column. 
 
 \item Netflix: a subset of data obtained from Kaggle~\cite{lao2019Netflix} from the Netflix prize data~\cite{netflix} of ratings by users on different movies. The sparsely reviewed movies (\(<2000\) ratings) and sparsely active users (\(<52\) reviews) are removed. The resulting dataset reflects the \(70^{th}\) percentile for density of reviews. In order to apply the algorithms below, we consider a random subset of 1000 users and 1000 movies. We threshold to consider only $4$ or $5$ star ratings as positive.

\end{itemize}
We compare the performance of TBMC and TBMC-AM against NMF and the Spectral method (see Section~\ref{subsec:rank1}). The rank parameter was optimised for NMF, with $5$ being found to be a good choice. The maximum rank was set to be $5$ in the Spectral method, although the method was found to often terminate before five tiles were found. Although increasing the rank initially leads to an improve in approximation accuracy, none of these methods are guaranteed to find a zero error approximation simply by increasing the rank. In addition, there is a risk of over-fitting for a rank parameter that is too high.  We also compare with a variant of the recursive partitioning approach of TBMC in which the LP-based rank-one approximation step is replaced by the \emph{average} and \emph{partition} heuristics (see Section~\ref{subsec:rank1}), again with and without alternating minimisation. 

\subsection{Performance of TBMC for rank-k decomposition}

We consider a $70$/$30$ split between known training entries and unseen test entries (\(\rho=0.7\)) and set \(t=0.05\), although we find the impact of changing \(t\) has little impact below \(0.1\). Note that MovieLens, RC, ChEMBL and Netflix have a low density of known entries \(\rho_{D}\). In all cases we record the percentage approximation error $\mathcal{P}$. We give approximation errors upon both the test set, and also upon the training set, averaged over 100 random trials. The first score tells us how well each method is able to predict missing entries. The second score tells us how well the method is able to generate a tiling model for the known data.

From \Cref{tab:rw_testerror}, we see that TBMC outperforms all other algorithms as a predictive method on all five data sets. The alternating minimisation (AM) step leads to improved results in some cases, but not always. The improvement over other methods is significant in the case of the gene expression (GE) and restaurant customer (RC) data. On the other hand, in the case of the MovieLens data, the improvement gained by using TBMC over the NMF method is marginal and for both MovieLens and Netflix, the gain over simpler partitioning using simpler heuristics, \emph{average} and \emph{partition} heuristics is also marginal. 

Approximation error on the training set allows us to compare how well the database tilings of the various methods are able to capture the known data. We see from \Cref{tab:rw_testerror} that TBMC outperforms all other methods on this task for the RC and GE datasets. For MovieLens, replacing the LP rank-one approximation in TBMC with the averaging heuristic is seen to give the best performance while for ChEMBL \emph{partition} gives the best performance. These results support the use of a partitioning for the approximation task.

We speculate that the varying results for different datasets are due in large part to the degree to which each database can be modelled as a low-rank binary factorisation (tiling). It should be emphasised that the prediction algorithms considered here are restricted to those which generate interpretative binary factorisations. We are not claiming that TBMC competes favourably with all prediction algorithms (such as neural networks) if the interpretability requirement is removed. That said, our results show clearly that partitioning algorithms, and in particular TBMC, perform well compared to other algorithms for low-rank matrix completion with binary factors.

We conclude by making two further observations. Firstly, the fact that simple heuristics can perform well highlights the importance of balancing higher accuracy with constraints on computational time. Secondly, the algorithm shows evidence of over-fitting the RC data as the percentage error on the known values (training set) is $5.5$\%. 

The RC database appears to be an especially good fit for approximation by binary factors (tiles). The clustering found by TBMC on the RC database is provided as an illustration in \Cref{fig:rc}. The rows and columns have been reordered for visual effect.

\begin{table}
\centering
\resizebox{\textwidth}{!}{%
\begin{tabular}{llcccccccccll}
\cline{7-11}
 &  & \multicolumn{1}{l}{} & \multicolumn{1}{l}{} & \multicolumn{1}{l}{} & \multicolumn{1}{l|}{} & \multicolumn{5}{c|}{\cellcolor[HTML]{C0C0C0}\(\mathcal{P}\)} &  &  
 
 \\ \cline{7-11}
 &  & \multicolumn{1}{l}{} & \multicolumn{1}{l}{} & \multicolumn{1}{l}{} & \multicolumn{1}{l|}{} & \multicolumn{5}{c|}{\begin{tabular}[c]{@{}c@{}}Test (AM)\\ Train (AM)\end{tabular}} &  &  
 
 \\ \cline{3-11}
 & \multicolumn{1}{l|}{} & \multicolumn{1}{l|}{\cellcolor[HTML]{C0C0C0}\(m\)} & \multicolumn{1}{l|}{\cellcolor[HTML]{C0C0C0}\(n\)} & \multicolumn{1}{l|}{\cellcolor[HTML]{C0C0C0}\(\rho\)} & \multicolumn{1}{l|}{\cellcolor[HTML]{C0C0C0}\(\rho_D\)} & \multicolumn{1}{l|}{\cellcolor[HTML]{C0C0C0}TBMC} & \multicolumn{1}{l|}{\cellcolor[HTML]{C0C0C0}Avg} & \multicolumn{1}{l|}{\cellcolor[HTML]{C0C0C0}Partition} & \multicolumn{1}{l|}{\cellcolor[HTML]{C0C0C0}NMF} & \multicolumn{1}{l|}{\cellcolor[HTML]{C0C0C0}Spectral} &  &  
 
  \\ \cline{2-11}
\multicolumn{1}{l|}{} & \multicolumn{1}{l|}{\cellcolor[HTML]{C0C0C0}} & \multicolumn{1}{c|}{\cellcolor[HTML]{EFEFEF}} & \multicolumn{1}{c|}{\cellcolor[HTML]{EFEFEF}} & \multicolumn{1}{c|}{\cellcolor[HTML]{EFEFEF}} & \multicolumn{1}{c|}{\cellcolor[HTML]{EFEFEF}} 
& \multicolumn{1}{c|}{\begin{tabular}[c]{@{}c@{}}19.0\\ \(\bf{(15.4)}\)\end{tabular}} 
& \multicolumn{1}{c|}{\begin{tabular}[c]{@{}c@{}}30.7 \\(16.0) \end{tabular}} 
& \multicolumn{1}{c|}{\begin{tabular}[c]{@{}c@{}} 29.7 \\ (17.1) \end{tabular}} 
& \multicolumn{1}{c|}{\begin{tabular}[c]{@{}c@{}} 27.7 \end{tabular}} 
& \multicolumn{1}{c|}{\begin{tabular}[c]{@{}c@{}} 29.9 \end{tabular}} &  &

\\ \cline{7-11}
\multicolumn{1}{l|}{} & \multicolumn{1}{l|}{\multirow{-2}{*}{\cellcolor[HTML]{C0C0C0}ChEMBL}} & \multicolumn{1}{c|}{\multirow{-2}{*}{\cellcolor[HTML]{EFEFEF}7500}} & \multicolumn{1}{c|}{\multirow{-2}{*}{\cellcolor[HTML]{EFEFEF}2183}} & \multicolumn{1}{c|}{\multirow{-2}{*}{\cellcolor[HTML]{EFEFEF}0.7}} & \multicolumn{1}{c|}{\multirow{-2}{*}{\cellcolor[HTML]{EFEFEF}0.01}} 

& \multicolumn{1}{c|}{\begin{tabular}[c]{@{}c@{}}16.5\\ \bf{(14.2)}  \end{tabular}} 
& \multicolumn{1}{c|}{\begin{tabular}[c]{@{}c@{}}18.0 \\(18.0)\end{tabular}} 
& \multicolumn{1}{c|}{\begin{tabular}[c]{@{}c@{}}15.7 \\(14.7)\end{tabular}} 
& \multicolumn{1}{c|}{26.4} 
& \multicolumn{1}{c|}{30.7} &  &  

 \\ \cline{2-11}
\multicolumn{1}{l|}{} & \multicolumn{1}{l|}{\cellcolor[HTML]{C0C0C0}} & \multicolumn{1}{c|}{\cellcolor[HTML]{EFEFEF}} & \multicolumn{1}{c|}{\cellcolor[HTML]{EFEFEF}} & \multicolumn{1}{c|}{\cellcolor[HTML]{EFEFEF}} & \multicolumn{1}{c|}{\cellcolor[HTML]{EFEFEF}} & \multicolumn{1}{c|}{\begin{tabular}[c]{@{}c@{}}\bf{20.8}\\ (22.7)\end{tabular}} & \multicolumn{1}{c|}{\begin{tabular}[c]{@{}c@{}}21.3\\ (23.3)\end{tabular}} & \multicolumn{1}{c|}{\begin{tabular}[c]{@{}c@{}}33.8\\ (30.1)\end{tabular}} & \multicolumn{1}{c|}{21.1} & \multicolumn{1}{c|}{21.1} &  &  

\\ \cline{7-11}
\multicolumn{1}{l|}{} & \multicolumn{1}{l|}{\multirow{-2}{*}{\cellcolor[HTML]{C0C0C0}ML}} & \multicolumn{1}{c|}{\multirow{-2}{*}{\cellcolor[HTML]{EFEFEF}1600}} & \multicolumn{1}{c|}{\multirow{-2}{*}{\cellcolor[HTML]{EFEFEF}943}} & \multicolumn{1}{c|}{\multirow{-2}{*}{\cellcolor[HTML]{EFEFEF}0.7}} & \multicolumn{1}{c|}{\multirow{-2}{*}{\cellcolor[HTML]{EFEFEF}0.06}} & \multicolumn{1}{c|}{\begin{tabular}[c]{@{}c@{}}20.6\\ (19.5)\end{tabular}} & \multicolumn{1}{c|}{\begin{tabular}[c]{@{}c@{}}{\bf{17.7}}\\ {(19.0)}\end{tabular}} & \multicolumn{1}{c|}{\begin{tabular}[c]{@{}c@{}}33.3\\ (23.1)\end{tabular}} & \multicolumn{1}{c|}{19.5} & \multicolumn{1}{c|}{21.3} &  &

 \\ \cline{2-11}
\multicolumn{1}{l|}{} & \multicolumn{1}{l|}{\cellcolor[HTML]{C0C0C0}} & \multicolumn{1}{c|}{\cellcolor[HTML]{EFEFEF}} & \multicolumn{1}{c|}{\cellcolor[HTML]{EFEFEF}} & \multicolumn{1}{c|}{\cellcolor[HTML]{EFEFEF}} & \multicolumn{1}{c|}{\cellcolor[HTML]{EFEFEF}} & \multicolumn{1}{c|}{\begin{tabular}[c]{@{}c@{}}{\bf{19.5}}\\ (35.5)\end{tabular}} & \multicolumn{1}{c|}{\begin{tabular}[c]{@{}c@{}}22.2\\ (31.4)\end{tabular}} & \multicolumn{1}{c|}{\begin{tabular}[c]{@{}c@{}}23.3\\ (29.7)\end{tabular}} & \multicolumn{1}{c|}{22.2} & \multicolumn{1}{c|}{22.0} &  &  

\\ \cline{7-11}
\multicolumn{1}{l|}{} & \multicolumn{1}{l|}{\multirow{-2}{*}{\cellcolor[HTML]{C0C0C0}RC}} & \multicolumn{1}{c|}{\multirow{-2}{*}{\cellcolor[HTML]{EFEFEF}138}} & \multicolumn{1}{c|}{\multirow{-2}{*}{\cellcolor[HTML]{EFEFEF}130}} & \multicolumn{1}{c|}{\multirow{-2}{*}{\cellcolor[HTML]{EFEFEF}0.7}} & \multicolumn{1}{c|}{\multirow{-2}{*}{\cellcolor[HTML]{EFEFEF}0.065}} & \multicolumn{1}{c|}{\begin{tabular}[c]{@{}c@{}}\bf{4.8}\\ (5.5)\end{tabular}} & \multicolumn{1}{c|}{\begin{tabular}[c]{@{}c@{}}17.2\\ (18.2)\end{tabular}} & \multicolumn{1}{c|}{\begin{tabular}[c]{@{}c@{}}23.4\\ (17.7)\end{tabular}} & \multicolumn{1}{c|}{16.1} & \multicolumn{1}{c|}{22.2} &  &  

\\ \cline{2-11}
\multicolumn{1}{l|}{} & \multicolumn{1}{l|}{\cellcolor[HTML]{C0C0C0}} & \multicolumn{1}{c|}{\cellcolor[HTML]{EFEFEF}} & \multicolumn{1}{c|}{\cellcolor[HTML]{EFEFEF}} & \multicolumn{1}{c|}{\cellcolor[HTML]{EFEFEF}} & \multicolumn{1}{c|}{\cellcolor[HTML]{EFEFEF}} & \multicolumn{1}{c|}{\begin{tabular}[c]{@{}c@{}}11.6\\ \bf{10.9}\end{tabular}} & \multicolumn{1}{c|}{\begin{tabular}[c]{@{}c@{}}21.2\\ (21.2)\end{tabular}} & \multicolumn{1}{c|}{\begin{tabular}[c]{@{}c@{}}20.4\\ (15.2)\end{tabular}} & \multicolumn{1}{c|}{14.3} & \multicolumn{1}{c|}{21.2} &  &  

\\ \cline{7-11}
\multicolumn{1}{l|}{} & \multicolumn{1}{l|}{\multirow{-2}{*}{\cellcolor[HTML]{C0C0C0}GE}} & \multicolumn{1}{c|}{\multirow{-2}{*}{\cellcolor[HTML]{EFEFEF}5000}} & \multicolumn{1}{c|}{\multirow{-2}{*}{\cellcolor[HTML]{EFEFEF}38}} & \multicolumn{1}{c|}{\multirow{-2}{*}{\cellcolor[HTML]{EFEFEF}0.7}} & \multicolumn{1}{c|}{\multirow{-2}{*}{\cellcolor[HTML]{EFEFEF}1.0}} & \multicolumn{1}{c|}{\begin{tabular}[c]{@{}c@{}}\bf{11.3}\\ (10.4)\end{tabular}} & \multicolumn{1}{c|}{\begin{tabular}[c]{@{}c@{}}21.5\\ (21.5)\end{tabular}} & \multicolumn{1}{c|}{\begin{tabular}[c]{@{}c@{}}18.7\\ (12.2)\end{tabular}} & \multicolumn{1}{c|}{12.6} & \multicolumn{1}{c|}{21.5} &  &  

 \\ \cline{2-11}
\multicolumn{1}{l|}{} 
& \multicolumn{1}{l|}{\cellcolor[HTML]{C0C0C0}} 
& \multicolumn{1}{c|}{\cellcolor[HTML]{EFEFEF}} 
& \multicolumn{1}{c|}{\cellcolor[HTML]{EFEFEF}} 
& \multicolumn{1}{c|}{\cellcolor[HTML]{EFEFEF}} 
& \multicolumn{1}{c|}{\cellcolor[HTML]{EFEFEF}} 
& \multicolumn{1}{c|}{\begin{tabular}[c]{@{}c@{}}\bf{18.3} \\(22.8) \end{tabular}} 
& \multicolumn{1}{c|}{\begin{tabular}[c]{@{}c@{}}18.9\\(21.0)\end{tabular}} 
& \multicolumn{1}{c|}{\begin{tabular}[c]{@{}c@{}}19.9\\(18.6)\end{tabular}} 
& \multicolumn{1}{c|}{20.4} 
& \multicolumn{1}{c|}{19.9} 
&  &  

\\ \cline{7-11}
\multicolumn{1}{l|}{} 
& \multicolumn{1}{l|}{\multirow{-2}{*}{\cellcolor[HTML]{C0C0C0}Netflix}} 
& \multicolumn{1}{c|}{\multirow{-2}{*}{\cellcolor[HTML]{EFEFEF}1000}} 
& \multicolumn{1}{c|}{\multirow{-2}{*}{\cellcolor[HTML]{EFEFEF}1000}} 
& \multicolumn{1}{c|}{\multirow{-2}{*}{\cellcolor[HTML]{EFEFEF}0.7}} 
& \multicolumn{1}{c|}{\multirow{-2}{*}{\cellcolor[HTML]{EFEFEF}0.06}}
& \multicolumn{1}{c|}{\begin{tabular}[c]{@{}c@{}}\bf{15.7}\\(13.57)\end{tabular}} 
& \multicolumn{1}{c|}{\begin{tabular}[c]{@{}c@{}}17.0\\(13.9)\end{tabular}} 
& \multicolumn{1}{c|}{\begin{tabular}[c]{@{}c@{}}20.0\\(14.2)\end{tabular}} 
& \multicolumn{1}{c|}{16.4} 
& \multicolumn{1}{c|}{20.0} &  &

\\ \cline{2-11}
 &  & \multicolumn{1}{l}{} & \multicolumn{1}{l}{} & \multicolumn{1}{l}{} & \multicolumn{1}{l}{} & \multicolumn{1}{l}{} & \multicolumn{1}{l}{} & \multicolumn{1}{l}{} & \multicolumn{1}{l}{} & \multicolumn{1}{l}{} &  & 
\end{tabular}%
}
	\caption{Proportional error $\mathcal{P}$ for different datasets, on both test and training sets, with and without alternating minimisation (AM).}
	\label{tab:rw_testerror}
\end{table}

\begin{minipage}{\linewidth}

\begin{minipage}{.45\linewidth}
\centering
\includegraphics[width=0.8\linewidth]{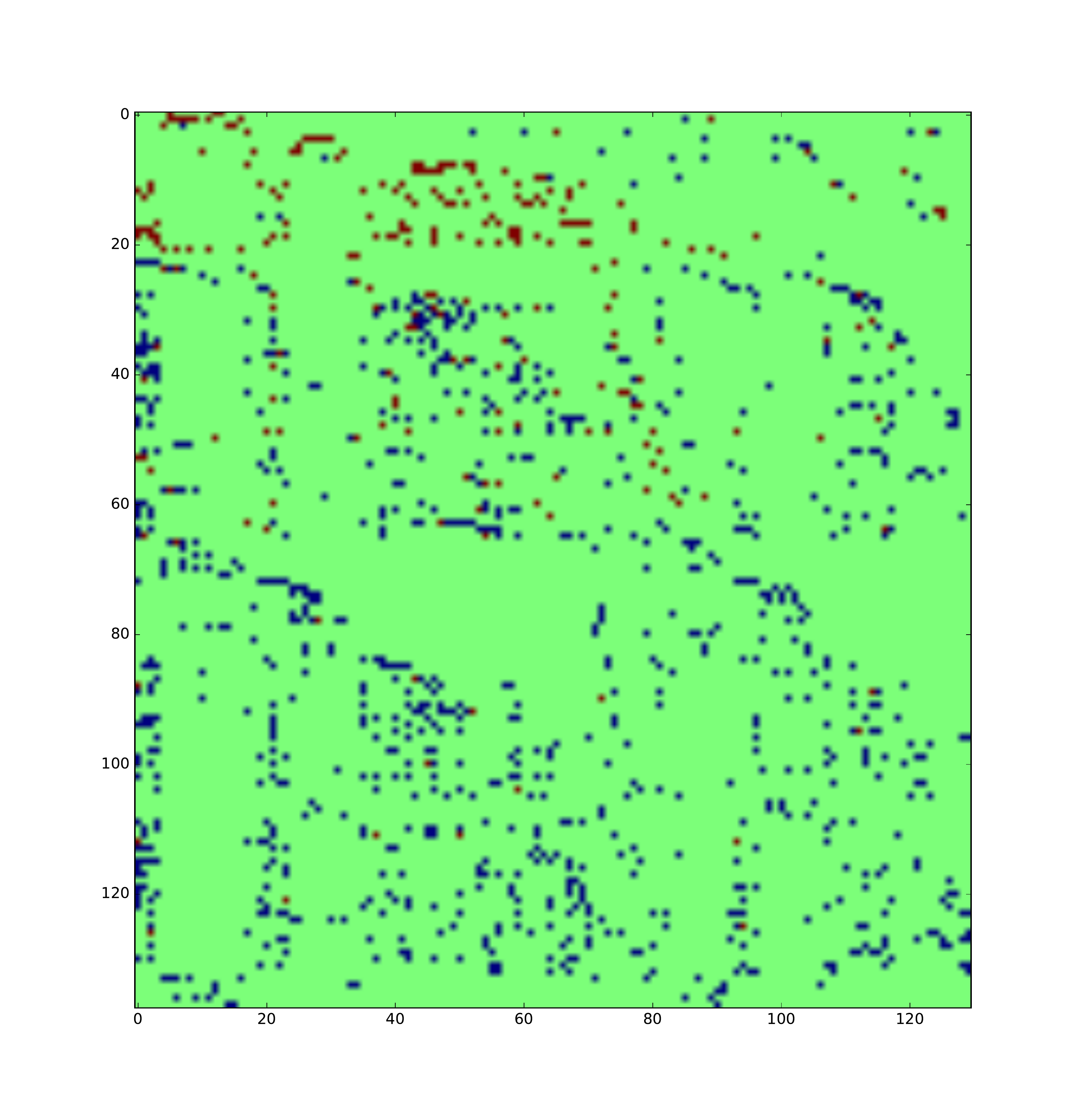}
\end{minipage}
\begin{minipage}{0.5\linewidth}
\centering
\includegraphics[width=\linewidth]{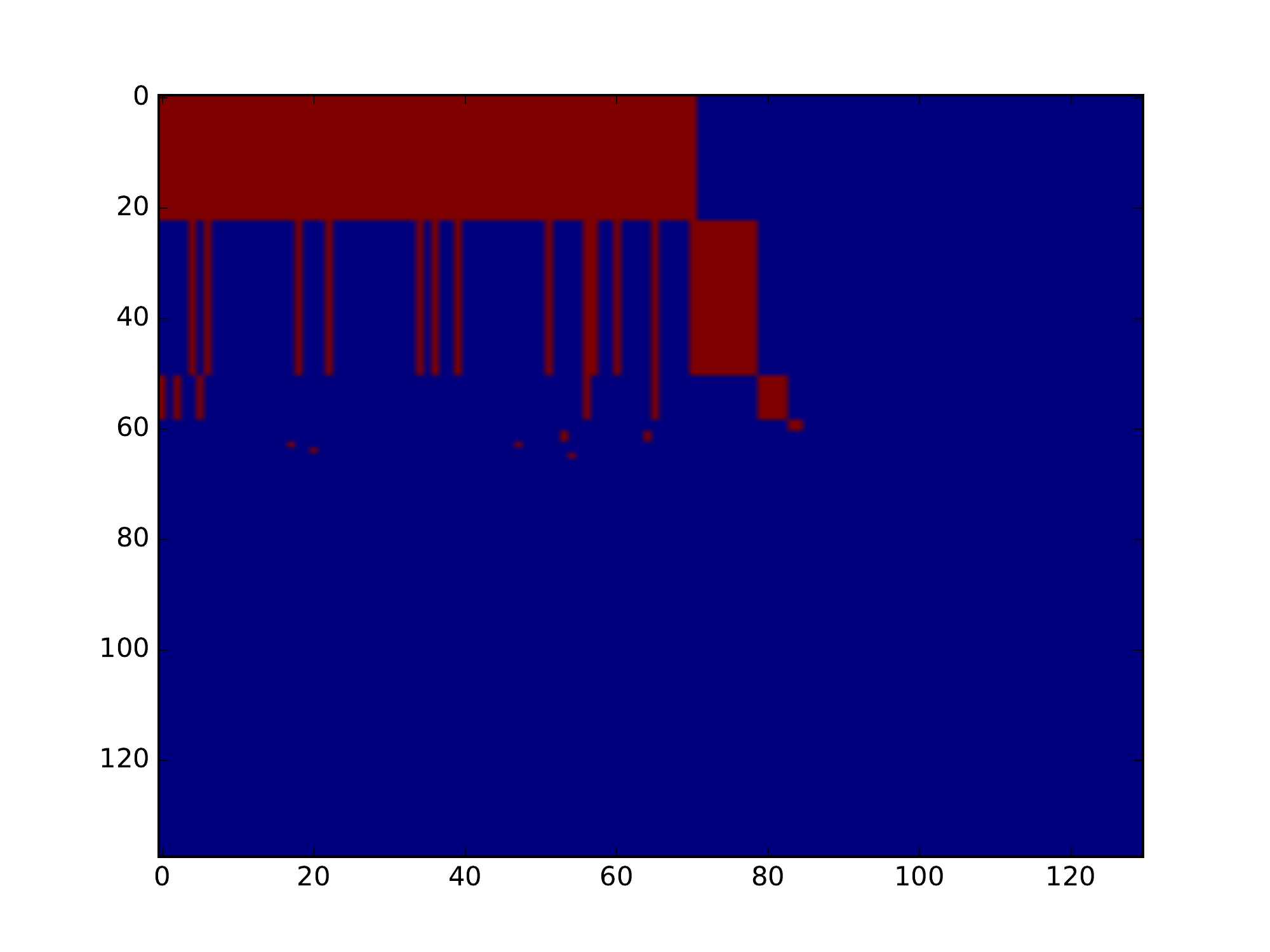}
\end{minipage}
\captionof{figure}{The RC dataset (left) with rows and columns reordered according to the factorisation found by TBMC (right).}
\label{fig:rc}
\end{minipage}

	\section{Conclusion and future directions}

Binary matrix completion for recommender systems has numerous applications. Influenced by approaches to binary matrix factorisation in the itemset mining literature, we have proposed the TBMC algorithm for binary matrix completion and shown that it outperforms alternatives on both synthetic and real data sets for certain regimes. These results make the case for the consideration of heuristic methods where typical assumptions for low rank matrix completion are violated and exact recovery is not guaranteed.

We have presented a theoretical recovery guarantee for TBMC for geometrically decaying diagonal blocks. It would interesting to extend the result to incorporate random models for missing data and noise, and also to consider alternative block models.

\bibliographystyle{plainnat}
	\bibliography{refs.bib}    

\begin{thebibliography}{35}
\providecommand{\natexlab}[1]{#1}
\providecommand{\url}[1]{\texttt{#1}}
\expandafter\ifx\csname urlstyle\endcsname\relax
  \providecommand{\doi}[1]{doi: #1}\else
  \providecommand{\doi}{doi: \begingroup \urlstyle{rm}\Url}\fi

\bibitem[net()]{netflix}
URL \url{www.netflixprize.com}.

\bibitem[Aggarwal(2016)]{aggarwal2016recommender}
Charu~C Aggarwal.
\newblock \emph{Recommender systems}.
\newblock Springer, 2016.

\bibitem[Ames and Vavasis(2011)]{ames2011nuclear}
Brendan~PW Ames and Stephen~A Vavasis.
\newblock Nuclear norm minimization for the planted clique and biclique
  problems.
\newblock \emph{Mathematical Programming}, 129\penalty0 (1):\penalty0 69--89,
  2011.

\bibitem[Bento et~al.(2014)Bento, Gaulton, Hersey, Bellis, Chambers, Davies,
  Kr{\"u}ger, Light, Mak, McGlinchey, et~al.]{bento2014chembl}
A~Patr{\'\i}cia Bento, Anna Gaulton, Anne Hersey, Louisa~J Bellis, Jon
  Chambers, Mark Davies, Felix~A Kr{\"u}ger, Yvonne Light, Lora Mak, Shaun
  McGlinchey, et~al.
\newblock The chembl bioactivity database: an update.
\newblock \emph{Nucleic acids research}, 42\penalty0 (D1):\penalty0
  D1083--D1090, 2014.

\bibitem[Bland(1977)]{bland1977new}
Robert~G Bland.
\newblock New finite pivoting rules for the simplex method.
\newblock \emph{Mathematics of operations Research}, 2\penalty0 (2):\penalty0
  103--107, 1977.

\bibitem[Brunet et~al.(2004)Brunet, Tamayo, Golub, and
  Mesirov]{brunet2004metagenes}
Jean-Philippe Brunet, Pablo Tamayo, Todd~R Golub, and Jill~P Mesirov.
\newblock Metagenes and molecular pattern discovery using matrix factorization.
\newblock \emph{Proceedings of the National Academy of Sciences}, 101\penalty0
  (12):\penalty0 4164--4169, 2004.

\bibitem[Cand{\`e}s and Recht(2009)]{candes2009exact}
Emmanuel~J Cand{\`e}s and Benjamin Recht.
\newblock Exact matrix completion via convex optimization.
\newblock \emph{Foundations of Computational mathematics}, 9\penalty0
  (6):\penalty0 717, 2009.

\bibitem[Dantzig et~al.(1955)Dantzig, Orden, Wolfe,
  et~al.]{dantzig1955generalized}
George~B Dantzig, Alex Orden, Philip Wolfe, et~al.
\newblock The generalized simplex method for minimizing a linear form under
  linear inequality restraints.
\newblock \emph{Pacific Journal of Mathematics}, 5\penalty0 (2):\penalty0
  183--195, 1955.

\bibitem[Fazel(2002)]{fazel2002matrix}
Maryam Fazel.
\newblock \emph{Matrix rank minimization with applications}.
\newblock PhD thesis, PhD thesis, Stanford University, 2002.

\bibitem[Geerts et~al.(2004)Geerts, Goethals, and
  Mielik{\"a}inen]{geerts2004tiling}
Floris Geerts, Bart Goethals, and Taneli Mielik{\"a}inen.
\newblock Tiling databases.
\newblock In \emph{International Conference on Discovery Science}, pages
  278--289, 2004.

\bibitem[Golub et~al.(1999)Golub, Slonim, Tamayo, Huard, Gaasenbeek, Mesirov,
  Coller, Loh, Downing, Caligiuri, et~al.]{golub1999molecular}
Todd~R Golub, Donna~K Slonim, Pablo Tamayo, Christine Huard, Michelle
  Gaasenbeek, Jill~P Mesirov, Hilary Coller, Mignon~L Loh, James~R Downing,
  Mark~A Caligiuri, et~al.
\newblock Molecular classification of cancer: class discovery and class
  prediction by gene expression monitoring.
\newblock \emph{Science}, 286\penalty0 (5439):\penalty0 531--537, 1999.

\bibitem[Jain et~al.(2013)Jain, Netrapalli, and Sanghavi]{jain2013low}
Prateek Jain, Praneeth Netrapalli, and Sujay Sanghavi.
\newblock Low-rank matrix completion using alternating minimization.
\newblock In \emph{Proceedings of the forty-fifth annual ACM symposium on
  Theory of computing}, pages 665--674. ACM, 2013.

\bibitem[Jiang et~al.(2014)Jiang, Peng, Heath, and Yang]{jiang2014clustering}
Peng Jiang, Jiming Peng, Michael Heath, and Rui Yang.
\newblock A clustering approach to constrained binary matrix factorization.
\newblock In \emph{Data Mining and Knowledge Discovery for Big Data}, pages
  281--303. Springer, 2014.

\bibitem[Kitahara and Mizuno(2013)]{kitahara2013bound}
Tomonari Kitahara and Shinji Mizuno.
\newblock A bound for the number of different basic solutions generated by the
  simplex method.
\newblock \emph{Mathematical Programming}, 137\penalty0 (1-2):\penalty0
  579--586, 2013.

\bibitem[Koren et~al.(2009)Koren, Bell, and Volinsky]{koren2009matrix}
Yehuda Koren, Robert Bell, and Chris Volinsky.
\newblock Matrix factorization techniques for recommender systems.
\newblock \emph{Computer}, pages 30--37, 2009.

\bibitem[Kov\'acs et~al.(2017)Kov\'acs, G\"unl\"uk, and
  Hauser]{kovacs2018lowrank}
R\'eka Kov\'acs, Oktay G\"unl\"uk, and Raphael Hauser.
\newblock Low-rank {B}oolean matrix approximation by integer programming.
\newblock In \emph{NIPS Workshop on Optimization for Machine Learning}, Long
  Beach, CA, 2017.

\bibitem[Koyut{\"u}rk et~al.(2006)Koyut{\"u}rk, Grama, and
  Ramakrishnan]{koyuturk2006nonorthogonal}
Mehmet Koyut{\"u}rk, Ananth Grama, and Naren Ramakrishnan.
\newblock Nonorthogonal decomposition of binary matrices for bounded-error data
  compression and analysis.
\newblock \emph{{ACM} Transactions on Mathematical Software}, 32\penalty0
  (1):\penalty0 33--69, 2006.

\bibitem[Lao(2019)]{lao2019Netflix}
D~Lao.
\newblock Netflix movie recommendation, 2019.
\newblock URL
  \url{\url{https://www.kaggle.com/laowingkin/netflix-movie-recommendation}}.

\bibitem[Lee and Seung(1999)]{lee1999learning}
Daniel~D Lee and H~Sebastian Seung.
\newblock Learning the parts of objects by non-negative matrix factorization.
\newblock \emph{Nature}, 401\penalty0 (6755):\penalty0 788, 1999.

\bibitem[Liu et~al.(2015)Liu, Sun, Guan, Zheng, and Zhou]{liu2015improving}
Hui Liu, Jianjiang Sun, Jihong Guan, Jie Zheng, and Shuigeng Zhou.
\newblock Improving compound--protein interaction prediction by building up
  highly credible negative samples.
\newblock \emph{Bioinformatics}, 31\penalty0 (12):\penalty0 i221--i229, 2015.

\bibitem[Lu et~al.(2011)Lu, Vaidya, Atluri, Shin, and Jiang]{lu2011weighted}
Haibing Lu, Jaideep Vaidya, Vijayalakshmi Atluri, Heechang Shin, and Lili
  Jiang.
\newblock Weighted rank-one binary matrix factorization.
\newblock In \emph{Proceedings of the 2011 SIAM International Conference on
  Data Mining}, pages 283--294. SIAM, 2011.

\bibitem[Maxwell~Harper and Konstan()]{MovieLens}
F.~Maxwell~Harper and J.~Konstan.
\newblock The {M}ovie{L}ens datasets: History and context. {ACM} transactions
  on interactive intelligent systems ({TiiS}).

\bibitem[Medellín et~al.()Medellín, González~Sern, and Vargas-Govea]{RC}
Rafael Medellín, Juan González~Sern, and Blanca Vargas-Govea.
\newblock
  \url{https://www.kaggle.com/uciml/restaurant-data-with-consumer-ratings}.
\newblock Accessed: 2018-11-30.

\bibitem[Miettinen et~al.(2008)Miettinen, Mielik{\"a}inen, Gionis, Das, and
  Mannila]{miettinen2008discrete}
Pauli Miettinen, Taneli Mielik{\"a}inen, Aristides Gionis, Gautam Das, and
  Heikki Mannila.
\newblock The discrete basis problem.
\newblock \emph{IEEE Transactions on Knowledge and Data Engineering},
  20\penalty0 (10):\penalty0 1348--1362, 2008.

\bibitem[Mirisaee et~al.(2015)Mirisaee, Gaussier, and
  Termier]{mirisaee2015improved}
Seyed~Hamid Mirisaee, Eric Gaussier, and Alexandre Termier.
\newblock Improved local search for binary matrix factorization.
\newblock In \emph{AAAI}, pages 1198--1204, 2015.

\bibitem[Qi et~al.(2009)Qi, Zhao, Li, and Simon]{qi2009non}
Qihao Qi, Yingdong Zhao, MingChung Li, and Richard Simon.
\newblock Non-negative matrix factorization of gene expression profiles: a
  plug-in for brb-arraytools.
\newblock \emph{Bioinformatics}, 25\penalty0 (4):\penalty0 545--547, 2009.

\bibitem[Shen et~al.(2009)Shen, Ji, and Ye]{shen2009mining}
Bao-Hong Shen, Shuiwang Ji, and Jieping Ye.
\newblock Mining discrete patterns via binary matrix factorization.
\newblock In \emph{Proceedings of the 15th ACM SIGKDD international conference
  on Knowledge discovery and data mining}, pages 757--766, 2009.

\bibitem[Tanner and Wei(2016)]{tanner2016low}
Jared Tanner and Ke~Wei.
\newblock Low rank matrix completion by alternating steepest descent methods.
\newblock \emph{Applied and Computational Harmonic Analysis}, 40\penalty0
  (2):\penalty0 417--429, 2016.

\bibitem[Vinayak et~al.(2014)Vinayak, Oymak, and Hassibi]{vinayak2014graph}
Ramya~Korlakai Vinayak, Samet Oymak, and Babak Hassibi.
\newblock Graph clustering with missing data: Convex algorithms and analysis.
\newblock In \emph{Advances in Neural Information Processing Systems}, pages
  2996--3004, 2014.

\bibitem[Wang and Singh(2017)]{wang2017provably}
Yining Wang and Aarti Singh.
\newblock Provably correct algorithms for matrix column subset selection with
  selectively sampled data.
\newblock \emph{The Journal of Machine Learning Research}, 18\penalty0
  (1):\penalty0 5699--5740, 2017.

\bibitem[Xu et~al.(2014)Xu, Wu, Zhu, Hajek, Srikant, and Ying]{xu2014jointly}
Jiaming Xu, Rui Wu, Kai Zhu, Bruce Hajek, R~Srikant, and Lei Ying.
\newblock Jointly clustering rows and columns of binary matrices: Algorithms
  and trade-offs.
\newblock In \emph{ACM SIGMETRICS Performance Evaluation Review}, volume~42,
  pages 29--41. ACM, 2014.

\bibitem[Xu et~al.(2015)Xu, Jin, and Zhou]{xu2015cur}
Miao Xu, Rong Jin, and Zhi-Hua Zhou.
\newblock Cur algorithm for partially observed matrices.
\newblock In \emph{International Conference on Machine Learning}, pages
  1412--1421, 2015.

\bibitem[Xu et~al.(2012)Xu, Yin, Wen, and Zhang]{xu2012alternating}
Yangyang Xu, Wotao Yin, Zaiwen Wen, and Yin Zhang.
\newblock An alternating direction algorithm for matrix completion with
  nonnegative factors.
\newblock \emph{Frontiers of Mathematics in China}, 7\penalty0 (2):\penalty0
  365--384, 2012.

\bibitem[Yadava(2012)]{yadava2012boolean}
Prashant Yadava.
\newblock Boolean matrix factorization with missing values.
\newblock Master's thesis, Universitat des Saarlandes, 2012.

\bibitem[Zhang et~al.(2010)Zhang, Li, Ding, Ren, and Zhang]{zhang2010binary}
Zhong-Yuan Zhang, Tao Li, Chris Ding, Xian-Wen Ren, and Xiang-Sun Zhang.
\newblock Binary matrix factorization for analyzing gene expression data.
\newblock \emph{Data Mining and Knowledge Discovery}, 20\penalty0 (1):\penalty0
  28, 2010.

\end{thebibliography}

	\end{document}